\numberwithin{equation}{section}
\newtheorem{theorem}{Theorem}
\newtheorem{lemma}{Lemma}[section]
\newtheorem{thm}[lemma]{Theorem}
\newtheorem{prop}[lemma]{Proposition}
\newtheorem{definition}[lemma]{Definition}
\newtheorem{corollary}[lemma]{Corollary}
\newtheorem{remark}[lemma]{Remark}
\newtheorem{example}[lemma]{Example}
\newcommand{\add}{\rm{add}}
\newcommand{\pd}{\rm{pd}}
\newcommand{\Ext}{\rm{Ext}}
\newcommand{\End}{\rm{End}}
\newcommand{\Hom}{\rm{Hom}}
\newcommand{\mo}{\rm{mod}}
\newcommand{\id}{\rm{id}}
\newcommand{\dom}{\rm{domdim}}
\newcommand{\gl}{\rm{gld}}
\newcommand{\ind}{\rm{Ind}}
\newcommand{\soc}{\rm{soc}}
\newcommand{\prinj}{\rm{prinj}}
\newcommand{\sub}{\rm{Sub}}
\newcommand{\fac}{\rm{Fac}}
\begin{document}
\title{\bf Higher Auslander algebras of finite representation type }
\author{Shen Li}
\date{}
\maketitle
\vspace{-2em}

\begin{center}\section*{}\end{center}

\begin{abstract}
Let $\Lambda$ be an $n$-Auslander algebra with global dimension $n+1$. In this paper, we prove that $\Lambda$ is representation-finite if and only if the number of non-isomorphic indecomposable $\Lambda$-modules with projective dimension $n+1$ is finite. As an application, we classify the representation-finite higher Auslander algebras of linearly oriented type $\mathbb{A}$ in the sense of Iyama and calculate the number of non-isomorphic indecomposable modules over these algebras.
\end{abstract}

{\small {\bf Key words and phrases:} \ {\rm Higher Auslander algebras, Higher Auslander algebras of linearly oriented type $\mathbb{A}$, Algebras of finite representation type}}

{\small {\bf Mathematics Subject Classification 2010:} \ {\rm 16E10, 16G10, 16G60}}

\vskip0.2in
\section{Introduction}
Let $A$ be a representation-finite algebra and $M_{1},M_{2},\cdots,M_{m}$ be a complete set of non-isomorphic indecomposable $A$-modules. Then $M=\oplus^{m}_{i=1}M_{i}$ is an additive generator of the category $A$-{\mo} and the endomorphism algebra $\Lambda={\End}_{A}M$ is called the \emph{Auslander algebra} of $A$. Auslander algebras are characterized as algebras with global dimension at most 2 and dominant dimension at least 2, see \cite{A}. Recall that the \emph{dominant dimension of an algebra $\Gamma$} is the largest integer $d\geq 0$ such that the terms $I_{0}, I_{1}, \cdots,I_{d-1}$ in the minimal injective resolution of $\Gamma$ are projective. An interesting question is to consider whether the Auslander algebra $\Lambda$ of a representation-finite algebra $A$ is still of finite representation type. Auslander and Reiten proved in \cite{AR} that $\Lambda$ is representation-finite if and only if the triangular matrix algebra $T_{2}(A)= \left( \begin{smallmatrix}
                       A & 0 \\
                       A & A
                      \end{smallmatrix}
                      \right) $
is representation-finite. A classification of representation-finite Auslander algebras $\Lambda$ in terms of the universal cover of the Auslander-Reiten quiver of $A$ was obtained in \cite{IPTZ}.

Higher Auslander algebras were introduced by Iyama in \cite{I1} which are a generalisation of Auslander algebras. Recall that an algebra $\Lambda$ is called an \emph{$n$-Auslander algebra} if its global dimension is at most $n+1$ and dominant dimension is at least $n+1$. Any $n$-Auslander algebra arises as the endomorphism algebra of an $n$-cluster tilting module over some algebra, see \cite{I1,I2}. Both $n$-Auslander algebras and $n$-cluster tilting modules are central objects of study in higher homological algebra. For the case $n=1$, they are precisely classical Auslander algebras and additive generators for representation-finite algebras, respectively. Then it is natural to ask when an $n$-Auslander algebra is of finite representation type.

Note that the largest possible projective dimension of a module over an $n$-Auslander algebra is $n+1$. Moreover, a module has projective dimension $n+1$ if and only if its socle has projective dimension $n+1$, see \cite{LMZ}. In this paper, we show that every indecomposable non-projective module with projective dimension at most $n$ occurs as a syzygy in the projective resolution of those with projective dimension $n+1$ and prove the following result.

\begin{theorem}{\rm (Theorem 3.2)} Let $\Lambda$ be an $n$-Auslander algebra. Then $\Lambda$ is representation-finite if and only if the number of non-isomorphic indecomposable $\Lambda$-modules with projective dimension $n+1$ is finite.
\end{theorem}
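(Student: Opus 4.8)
The forward implication is immediate: if $\Lambda$ is representation-finite there are only finitely many indecomposables of any prescribed projective dimension. For the converse I use that $\mathrm{gl}\,\Lambda\le n+1$, so every indecomposable $\Lambda$-module has projective dimension in $\{0,1,\dots,n+1\}$. The indecomposables of projective dimension $0$ are the finitely many indecomposable projectives, and those of projective dimension $n+1$ are finite in number by hypothesis. Hence it suffices to bound the indecomposables $X$ with $1\le\pd X\le n$, and for this I will show that each such $X$ is a direct summand of some syzygy $\Omega^{\,i}N$ of an indecomposable $N$ with $\pd N=n+1$. Since there are finitely many such $N$, finitely many relevant $i$, and each $\Omega^{\,i}N$ has only finitely many indecomposable summands, this forces representation-finiteness.

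The plan is a descending induction on $d=\pd X$, from $d=n$ down to $d=1$, establishing the statement $C(d)$: every indecomposable $X$ with $\pd X=d$ is a summand of $\Omega^{\,n+1-d}N$ for some indecomposable $N$ with $\pd N=n+1$. The key geometric input (Lemma below) is that for indecomposable $X$ with $1\le\pd X\le n$ the injective envelope $I(X)$ is projective. Granting this, form the short exact sequence $0\to X\to I(X)\to Y\to 0$ with $I(X)$ projective-injective. Writing $I(X)$ as the projective cover of $Y$ plus a complementary projective, the complement splits off $X$ as a projective summand; since $X$ is indecomposable and non-projective this complement vanishes, so $X\cong\Omega Y$ and $\pd Y=d+1$. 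Decomposing $Y=\bigoplus_j Y_j$ gives $X\cong\bigoplus_j\Omega Y_j$, whence $X\cong\Omega Y_{j_0}$ for a single indecomposable summand $Y_{j_0}$ with $\pd Y_{j_0}=d+1$. If $d=n$ then $Y_{j_0}$ already has projective dimension $n+1$ and $X=\Omega Y_{j_0}$; if $d<n$, the inductive hypothesis $C(d+1)$ presents $Y_{j_0}$ as a summand of $\Omega^{\,n-d}N$, so $X\cong\Omega Y_{j_0}$ is a summand of $\Omega^{\,n+1-d}N$. This completes the induction, modulo the Lemma.

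It remains to prove the Lemma: if $X$ is indecomposable with $1\le\pd X\le n$, then $I(X)$ is projective. Since $I(X)=I(\soc X)=\bigoplus_{S}I(S)^{m_S}$ over the simple summands $S$ of $\soc X$, and since the socle characterization of \cite{LMZ} gives $\pd(\soc X)\le n$ (as $\pd X\le n$), each such $S$ satisfies $\pd S\le n$; it therefore suffices to show that a simple module $S$ with $\pd S\le n$ has $I(S)$ projective. Here the dominant dimension is essential. Because $\mathrm{domdim}\,\Lambda\ge 1$, the injective envelope $I^{0}$ of ${}_\Lambda\Lambda$ is projective, and $I^{0}=I(\soc\Lambda)$, so $I(S)$ is projective for every simple $S$ embedding in $\soc\Lambda$; conversely any indecomposable projective-injective is a summand of ${}_\Lambda\Lambda$, so its socle embeds in $\soc\Lambda$. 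Thus $\{S:I(S)\text{ projective}\}=\{S:S\hookrightarrow\soc\Lambda\}$, and the Lemma reduces to the inclusion $\{S:\pd S\le n\}\subseteq\{S:S\hookrightarrow\soc\Lambda\}$, equivalently to the contrapositive: if $I(S)$ is not projective then $\pd S=n+1$. I expect this last equivalence to be the main obstacle. It must be extracted from the interplay of $\mathrm{gl}\,\Lambda\le n+1$ with $\mathrm{domdim}\,\Lambda\ge n+1$, by comparing the minimal injective coresolution $0\to\Lambda\to I^{0}\to\cdots\to I^{n}\to\cdots$, whose first $n+1$ terms are projective-injective, against the minimal projective resolution of $S$ through the Nakayama functor $\nu$, under which $I(S)=\nu P(S)$ for the projective cover $P(S)$. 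Once this is in place the finiteness bookkeeping of the first two paragraphs is routine, and the homological heart of the theorem is precisely this relation between the projective dimension of simples and the socle of $\Lambda$.
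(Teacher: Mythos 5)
Your overall strategy coincides with the paper's: a descending induction showing that every indecomposable $X$ with $1\le{\pd}\,X\le n$ arises as the syzygy of an indecomposable module of projective dimension one higher, obtained from the exact sequence $0\to X\to I(X)\to Y\to 0$ with $I(X)$ projective-injective. Two remarks. First, where you identify $X\cong\Omega Y_{j_0}$ for a single indecomposable summand $Y_{j_0}$ of $Y$ by splitting off superfluous projectives, the paper instead quotes a lemma of Huang and Zhang (Proposition 3.1): for a non-split sequence $0\to X\to Q\to Y\to 0$ with $Q$ projective-injective, $X$ indecomposable with $X\to Q$ an injective envelope forces $Y$ indecomposable with $Q\to Y$ a projective cover. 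Your summand argument is a correct elementary substitute for this. Second --- and this is the only real issue --- the step you flag as ``the main obstacle'', namely that a simple $S$ with ${\pd}\,S\le n$ has projective injective envelope, does not need to be re-extracted from the interplay of global and dominant dimension: it is precisely the characterization of $n$-Auslander algebras from the reference you already cite, recorded in the paper as Propositions 2.2 and 2.3, and it follows at once from the part of that characterization you invoke. Indeed, Proposition 2.3 states $P^{\leq n}(\Lambda)=\{X\,|\,{\pd}({\soc}\,X)\leq n\}={\sub}\,\Lambda$; hence ${\pd}\,S\le n$ gives $S\in{\sub}\,\Lambda$, so $S$ embeds into ${\soc}\,\Lambda$ and $I(S)$ is a direct summand of the injective envelope $I^{0}$ of $_{\Lambda}\Lambda$, which is projective because ${\dom}\,\Lambda\ge 1$. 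With that observation your Lemma, and hence the whole argument, closes up and is essentially the proof given in the paper.
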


Higher Auslander algebras of linearly oriented type $\mathbb{A}$ introduced in \cite{I3} are a remarkable family of algebras with finite global dimension admitting higher cluster tilting modules. Following a recursive construction, this family of algebras has a rich combinatorial structure and gains a lot of attention in representation theory, see for example \cite{DJW,HJ,JK,OT}. As an application of Theorem 1, we classify the representation-finite higher Auslander algebras of linearly oriented type $\mathbb{A}$ and calculate the number of non-isomorphic indecomposable modules.

\begin{theorem}{\rm(Theorem 4.8)}Let $A_{m}^{n}$ be the $(n-1)$-Auslander algebra of linearly oriented type $A_{m}$$(n,m\geq2)$ and $|{\ind}\,A_{m}^{n}|$ denote the number of non-isomorphic indecomposable $A_{m}^{n}$-modules. Then $A_{m}^{n}$ is representation-finite if and only if one of the followings holds:

 {\rm(1)} $m=2$. In this case, $|{\ind}\,A_{2}^{n}|=2n+1$.

 {\rm(2)} $m=3$. In this case, $|{\ind}\,A_{3}^{n}|=\frac{(n+1)(n^{2}+5n+3)}{3}$.

 {\rm(3)} $n=2$ and $m=4$. In this case, $|{\ind}\,A_{4}^{2}|=56$.
\end{theorem}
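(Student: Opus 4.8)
The plan is to combine Theorem 3.2 with an explicit analysis of the modules of top projective dimension over the higher Auslander algebras $A_m^n$. By Theorem 3.2, representation-finiteness of the $(n-1)$-Auslander algebra $A_m^n$ is equivalent to finiteness of the set of indecomposable modules of projective dimension $n$. So the first and central step is to identify, combinatorially, the indecomposable $A_m^n$-modules of projective dimension exactly $n$, and to determine for which pairs $(n,m)$ this set is finite. Since these algebras are built by a recursive (Iyama) construction and the relevant module category carries a rich simplicial/poset combinatorics, I would translate ``indecomposable of projective dimension $n$'' into a combinatorial condition on the underlying order-theoretic data (e.g. the tuples/chains indexing the simple or cluster-tilting summands) and count accordingly.

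Second, I would establish the cutoff in $(n,m)$. The claim is that finiteness holds exactly in the three regimes $m=2$ (all $n$), $m=3$ (all $n$), and $(n,m)=(2,4)$; everywhere else the algebra is representation-infinite. To prove infiniteness outside these cases, the efficient route is to exhibit, for each excluded pair, a family of pairwise non-isomorphic indecomposables of projective dimension $n$ (equivalently of top projective dimension), most naturally by locating a representation-infinite subquiver or a known wild/tame configuration inside the relevant summand of the module category; finiteness of the whole algebra would then fail via Theorem 3.2. For the three finite cases I would instead produce the modules of projective dimension $n$ directly from the recursive description and check there are only finitely many.

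Third, I would compute the total counts $|\ind A_m^n|$. Here the strategy exploits the other half of the paper's analysis: the remark in the introduction that every indecomposable non-projective module of projective dimension at most $n-1$ occurs as a syzygy in a projective resolution of a module of projective dimension $n$. This lets me organise all indecomposables into the projectives, the top-projective-dimension modules, and their iterated syzygies, so that the count becomes a bookkeeping over these syzygy chains. For $m=2$ I expect a short arithmetic progression giving $2n+1$; for $m=3$ the cubic $\tfrac{(n+1)(n^2+5n+3)}{3}$ should emerge from summing a quadratic count of indecomposables across syzygy levels (I would verify the closed form by induction on $n$, checking the base case and the recursion coming from the recursive construction of $A_3^{n}$); and for $(n,m)=(2,4)$ the number $56$ is a finite check, most cleanly done by computing the Auslander--Reiten quiver of the ordinary Auslander algebra $A_4^2$ directly.

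The main obstacle I anticipate is the combinatorial identification and counting of the modules of projective dimension exactly $n$, together with a clean verification that the syzygy decomposition accounts for \emph{every} indecomposable without double counting. The infiniteness direction for the excluded pairs should be comparatively routine once a single destabilising family is found, but pinning down the exact closed-form counts (especially the cubic in the $m=3$ case) will require carefully setting up the recursion and proving it by induction on $n$, which is where the bulk of the technical work lies.
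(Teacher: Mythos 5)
Your strategy for the finite cases and for the counts coincides with the paper's: Theorem 3.2 reduces representation-finiteness of the $(n-1)$-Auslander algebra $A_m^n$ to finiteness of the set of indecomposables of projective dimension $n$; these are listed explicitly for $m=2$ (where $A_2^n$ is a Nakayama algebra) and for $m=3$; and the totals come from the syzygy bookkeeping of Remark 3.3 and Proposition 3.6 (the paper computes $t_i=n+1-i$ for $m=3$ and substitutes into the closed formula rather than running an induction on $n$, a cosmetic difference). The value $56$ for $A_4^2$ is likewise a direct finite check of the Auslander algebra of $kA_4$.

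The genuine gap is in the representation-infinite direction. You propose to exhibit a destabilising family ``for each excluded pair,'' but the excluded pairs $(n,m)$ form an infinite set, so you need a mechanism that propagates representation-infiniteness across the parameters, and your plan does not supply one. The paper gets this from Proposition 2.6(2): since $A_m^{n+1}\cong{\End}_{A_m^n}M$ for an $n$-cluster tilting module $M$, the category $A_m^n$-mod embeds into $A_m^{n+1}$-mod as ${\fac}\,(\upsilon^{-1}Q)\cap{\sub}\,Q$, so representation-infiniteness of $A_m^n$ forces that of $A_m^{n+1}$. This reduces everything to two base cases. For $m\geq 5$ the paper does \emph{not} hunt for modules of projective dimension $2$ inside $A_m^2$; it invokes Auslander--Reiten's theorem that the Auslander algebra $A_m^2$ is representation-finite if and only if $T_2(kA_m)$ is, together with Marmaridis's classification (Proposition 4.7) showing $T_2(kA_5)$ is tame infinite. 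For $(n,m)=(3,4)$ it exhibits a subquiver of Euclidean type $\widetilde{\mathbb{A}}_5$ inside $Q_4^3$. Without the monotonicity step and these two specific inputs your plan cannot close the infinite cases; note also that ``locating a representation-infinite subquiver'' in an algebra with relations requires verifying that the relations of $A_m^n$ do not interfere with the chosen subquiver, which is why the paper selects that particular $\widetilde{\mathbb{A}}_5$ configuration.
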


This paper is organised as follows. In section 2, we recall some results on the projective dimension of modules over $n$-Auslander algebras.  In section 3, we provide a necessary and sufficient condition for higher Auslander algebras to be representation-finite. As an application, we classify representation-finite higher Auslander algebras of linearly oriented type $\mathbb{A}$ in section 4.

\section{Preliminaries}
Throughout this paper, all algebras are finite dimensional algebras over an algebraically closed field $k$ and all modules are finitely generated left modules. For an algebra $A$, we denote by $A$-{\mo} the category of finitely generated $A$-modules and by $D$ the standard duality functor ${\Hom}_{k}(-,k)$. The global dimension and dominant dimension of $A$ are denoted by ${\gl}\,A$ and ${\dom}\,A$, respectively. All subcategories of $A$-mod are assumed to be full. For a subcategory $\mathcal{T}$ of $A$-mod, we write $$\mathcal{T}^{\perp}=\{X\in A\text{-}{\mo}\,|\,{\Hom}_{A}(\mathcal{T},X)=0\},$$$$^{\perp}\mathcal{T}=\{X\in A\text{-}{\mo}\,|\,{\Hom}_{A}(X,\mathcal{T})=0\}.$$ For an $A$-module $M$, ${\soc}\,M$ stands for the socle of $M$ and ${\pd}\,M$ stands for the projective dimension of $M$. The subcategory ${\add}\,M$ of $A$-mod consists of direct summands of finite direct sums of $M$. We denote by ${\fac}\,M$ and ${\sub}\,M$ the subcategories consisting of $A$-modules generated and cogenerated by $M$, respectively.

The aim of this section is to show basic results on projective dimension of modules over higher Auslander algebras. These will be useful in the proof of our main results.

Let $A$ be an algebra and $n$ be a positive integer. An $A$-module $M$ is called an \emph{$n$-cluster tilting module} if it satisfies

\qquad ${\add}\,M=\{X\in A\text{-}{\mo}\,| \,{\Ext}^{i}_{A}(X,M)=0\;{\rm for} \;0<i<n\}$

\qquad \qquad \quad $=\{X\in A\text{-}{\mo}\,|\,{\Ext}^{i}_{A}(M,X)=0\;{\rm for}\; 0<i<n\}.$

Let $\Lambda$ be an $n$-Auslander algebra. According to the higher Auslander correspondence in \cite{I1}, there exists a basic $n$-cluster tilting module $M$ over an algebra $A$ such that $\Lambda\cong{\End}_{A}M$. The following result gives the projective dimensions of all simple $\Lambda$-modules. It is similar to \cite[Theorem 3.1]{LMZ} and we contain a proof here.

\begin{prop}
Let $M=\oplus^{t}_{i=1}M_{i}$ with $M_{i}$ indecomposable and $S_{i}$ be the top of the indecomposable projective $\Lambda$-module ${\Hom}_{A}(M_{i},M)$. Then we have

{\rm(1)} If $M_{i}$ is an injective $A$-module, the projective dimension of $S_{i}$ is at most $n$.

{\rm(2)} If $M_{i}$ is not injective, the projective dimension of $S_{i}$ is $n+1$.
\end{prop}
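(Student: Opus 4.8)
The plan is to compute the minimal projective resolution of each $S_i$ by transporting Iyama's higher Auslander--Reiten sequences through the contravariant functor $F=\Hom_A(-,M)$. Since $M$ is a basic $n$-cluster tilting module and $\Lambda=\End_A M$, the functor $F$ restricts to a duality between $\add M$ and the projective $\Lambda$-modules, sending $M_i$ to $P_i=\Hom_A(M_i,M)$ and a minimal left almost split map $M_i\to C^{0}$ in $\add M$ to a projective cover $F(C^{0})\to\mathrm{rad}\,P_i$. Throughout I would use the defining vanishing $\Ext^{k}_{A}(X,Y)=0$ for $X,Y\in\add M$ and $0<k<n$. The strategy is then uniform: an exact sequence with all terms in $\add M$ is converted by $F$ into a projective resolution of $S_i$, and the length of the resolution is read off from the length of the sequence.

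For a non-injective indecomposable $M_i$ I would start from the $n$-almost split sequence
$$0\to M_i\to C^{0}\to C^{1}\to\cdots\to C^{n-1}\to \tau_{n}^{-}M_i\to 0,$$
whose terms all lie in $\add M$. Splitting it into short exact sequences and applying $F$, the cluster tilting vanishing together with dimension shifting along the intermediate syzygies shows that the induced complex
$$0\to F(\tau_{n}^{-}M_i)\to F(C^{n-1})\to\cdots\to F(C^{0})\to P_i\to S_i\to 0$$
is exact, that is, a projective resolution of $S_i$ of length $n+1$. Because the almost split sequence is minimal and $F$ is a duality, this resolution is minimal; as $M_i$ is non-injective we have $\tau_{n}^{-}M_i\neq 0$, so the last term $F(\tau_{n}^{-}M_i)$ is a nonzero projective and $\pd S_i=n+1$.

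For an injective indecomposable $M_i$ the higher almost split sequence degenerates: the term $\tau_{n}^{-}M_i$ is absent, and the minimal left almost split map $M_i\to C^{0}$ out of the injective module $M_i$ is no longer a monomorphism. One then obtains an $\add M$-coresolution of $M_i$ that is one step shorter, and applying $F$ yields a projective resolution of $S_i$ of length at most $n$; hence $\pd S_i\le n$. Since $\pd S_i\le\gl\Lambda\le n+1$ holds in any case, it suffices to exhibit this shorter coresolution.

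The main obstacle is the exactness after applying $F$: one must check, by dimension shifting along the (higher) almost split sequence, that the intermediate syzygies satisfy the Ext-vanishing needed for $F$ to preserve exactness, so that the image complex is genuinely a resolution and not merely a complex with controlled cohomology. A secondary subtlety is the injective case, where the vanishing of the $\tau_{n}^{-}$ term and the failure of $M_i\to C^{0}$ to be a monomorphism must be handled carefully to produce the truncated coresolution. Finally, establishing minimality of the resolution---so that one concludes $\pd S_i=n+1$ rather than merely $\pd S_i\le n+1$ in the non-injective case---is obtained from minimality of the almost split sequence, which is preserved by the duality $F$.
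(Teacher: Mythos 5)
Your proposal follows essentially the same route as the paper: for non-injective $M_{i}$ the paper likewise applies ${\Hom}_{A}(-,M)$ to Iyama's $n$-almost split sequence $0\rightarrow M_{i}\rightarrow N^{1}\rightarrow\cdots\rightarrow N^{n+1}\rightarrow 0$ and reads off the minimal projective resolution of $S_{i}$ of length $n+1$. The one step you leave open --- actually exhibiting the truncated coresolution in the injective case --- is supplied in the paper by noting that the canonical epimorphism $M_{i}\rightarrow M_{i}/{\soc}\,M_{i}$ is left almost split (so applying ${\Hom}_{A}(-,M)$ to it yields $0\rightarrow{\Hom}_{A}(M_{i}/{\soc}\,M_{i},M)\rightarrow{\Hom}_{A}(M_{i},M)\rightarrow S_{i}\rightarrow 0$) and then invoking Iyama's theorem that $M_{i}/{\soc}\,M_{i}$ admits an ${\add}\,M$-coresolution $0\rightarrow M_{i}/{\soc}\,M_{i}\rightarrow M^{1}\rightarrow\cdots\rightarrow M^{n}\rightarrow 0$ remaining exact under ${\Hom}_{A}(-,M)$; splicing the two gives ${\pd}\,S_{i}\leq n$.
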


\begin{proof}
(1) Assume $M_{i}$ is an injective $A$-module. Then the canonical epimorphism $M_{i}\rightarrow M_{i}/{\soc}\,M_{i}$ is a left almost split morphism.
According to \cite[Lemma 6.9(b), IV]{ASS}, applying ${\Hom}_{A}(-,M)$ gives rise to the following short exact sequence

\qquad $0\rightarrow{\Hom}_{A}(M_{i}/{\soc}\,M_{i},M)\rightarrow {\Hom}_{A}(M_{i},M)\rightarrow S_{i}\rightarrow 0.\quad (\ast)$

Now consider $M_{i}/{\soc}\,M_{i}$. By \cite[Theorem 2.2.3]{I2}, there exists a long exact sequence $0\rightarrow M_{i}/{\soc}\,M_{i}\rightarrow M^{1}\rightarrow \cdots \rightarrow M^{n}\rightarrow 0 $ with $M^{j}\in {\add}\,M$ for $1\leq j\leq n$ such that the following sequence is exact.
$$0\rightarrow {\Hom}_{A}(M^{n},M)\rightarrow \cdots \rightarrow {\Hom}_{A}(M^{1},M)\rightarrow {\Hom}_{A}(M_{i}/{\soc}\,M_{i},M)\rightarrow0 (\ast\ast)$$
Combining $(\ast)$ and $(\ast\ast)$, we get the following projective resolution of $S_{i}$.
$$0\rightarrow {\Hom}_{A}(M^{n},M)\rightarrow \cdots \rightarrow {\Hom}_{A}(M^{1},M)\rightarrow {\Hom}_{A}(M_{i},M)\rightarrow S_{i}\rightarrow0 $$
This implies that $S_{i}$ has projective dimension at most $n$.

(2) Assume $M_{i}$ is not injective. According to \cite[Theorem 3.3.1]{I2}, there exists an $n$-almost split sequence $0\rightarrow M_{i}\rightarrow N^{1}\rightarrow \cdots \rightarrow N^{n}\rightarrow N^{n+1}\rightarrow 0$ with $N^{j}\in {\add}\,M$ for $1\leq j\leq n+1$. Applying ${\Hom}_{A}(-,M)$ yields the following minimal projective resolution of $S_{i}$.
$$0\rightarrow {\Hom}_{A}(N^{n+1},M)\rightarrow \cdots \rightarrow {\Hom}_{A}(N^{1},M)\rightarrow {\Hom}_{A}(M_{i},M)\rightarrow S_{i}\rightarrow 0$$
Thus $S_{i}$ has projective dimension $n+1$.
\end{proof}

We recall two characterizations of higher Auslander algebras in terms of the projective dimension of modules.

Denote by ${\prinj}(\Lambda)$ the subcategory consisting of all projective-injective $\Lambda$-modules.

\begin{prop}{\rm \cite[Corollary 4.2]{LMZ}}
Let $\Lambda$ be an algebra with global dimension $n+1$. Then $\Lambda$ is an $n$-Auslander algebra if and only if it satisfies

\qquad \quad ${\prinj}(\Lambda)=\{I\in\Lambda\text{-}{\mo}\,|\, I\ \text{is injective and} \ {\pd(\soc}\,I)\leq n\}.$
\end{prop}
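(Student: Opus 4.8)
The plan is to prove both implications by reducing to indecomposable injective modules and then exploiting one elementary inequality on projective dimensions. Write $\mathcal{C}$ for the class on the right-hand side. Both ${\prinj}(\Lambda)$ and $\mathcal{C}$ are closed under direct sums and summands, every indecomposable injective has the form $I(S)$ with socle a simple module $S$, and the projective dimension of a semisimple module is the maximum of the projective dimensions of its summands; hence the asserted equality is equivalent to the statement that, for each simple $S$, the injective envelope $I(S)$ is projective if and only if ${\pd}\,S\le n$. The linchpin, valid for any $\Lambda$ with ${\gl}\,\Lambda=n+1$, is the bound ${\pd}({\soc}\,M)\le\max({\pd}\,M,\,n)$ for every module $M$: this follows by applying ${\Ext}^i_{\Lambda}(-,X)$ to $0\to{\soc}\,M\to M\to M/{\soc}\,M\to0$ together with ${\pd}(M/{\soc}\,M)\le n+1$.

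For the forward implication I would assume ${\dom}\,\Lambda\ge n+1$, so the first $n+1$ terms $I^0,\dots,I^n$ of the minimal injective resolution of $\Lambda$ are projective. If $I$ is projective-injective then ${\pd}\,I=0$, so ${\pd}({\soc}\,I)\le n$ by the inequality and $I\in\mathcal{C}$. Conversely, let $S$ be simple with ${\pd}\,S=d\le n$, with minimal projective resolution $0\to P_d\xrightarrow{\phi}P_{d-1}\to\cdots\to P_0\to S\to0$. Applying ${\Hom}_{\Lambda}(-,\Lambda)$, the module ${\Ext}^d_{\Lambda}(S,\Lambda)$ is the cokernel of $\phi^{*}\colon{\Hom}_{\Lambda}(P_{d-1},\Lambda)\to{\Hom}_{\Lambda}(P_d,\Lambda)$; since $\phi$ is a radical map and ${\Hom}_{\Lambda}(P_d,\Lambda)\ne0$, this cokernel is nonzero. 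As the multiplicity of $I(S)$ in $I^d$ equals $\dim_k{\Ext}^d_{\Lambda}(S,\Lambda)$, the module $I(S)$ is a summand of the projective module $I^d$ and is therefore projective, giving ${\prinj}(\Lambda)=\mathcal{C}$.

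For the converse --- which I expect to carry the real content --- I would assume ${\prinj}(\Lambda)=\mathcal{C}$ and take the minimal injective resolution $0\to\Lambda\to I^0\to I^1\to\cdots$ with cosyzygies $C^j\subseteq I^j$ (so $C^0\cong\Lambda$ and $0\to C^j\to I^j\to C^{j+1}\to0$). I would prove by induction that ${\pd}\,C^j\le j$ for $0\le j\le n$. The base case is ${\pd}\,C^0={\pd}\,\Lambda=0$. For the step, since ${\pd}\,C^j\le j\le n$ the inequality gives ${\pd}({\soc}\,C^j)\le n$; but $C^j$ is essential in its injective envelope $I^j$, so ${\soc}\,I^j={\soc}\,C^j$ and $I^j=I({\soc}\,C^j)\in\mathcal{C}={\prinj}(\Lambda)$ is projective, whence ${\pd}\,C^{j+1}\le{\pd}\,C^j+1\le j+1$ from the short exact sequence. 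In particular each of $I^0,\dots,I^n$ is projective, that is, ${\dom}\,\Lambda\ge n+1$.

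The main obstacle is exactly this converse: a priori the hypothesis only records which injectives happen to be projective and says nothing directly about dominant dimension. What unlocks it is the elementary bound ${\pd}({\soc}\,M)\le\max({\pd}\,M,\,n)$, which lets finite global dimension propagate through the cosyzygies of $\Lambda$, keeping each cosyzygy socle of projective dimension $\le n$ and thereby forcing the corresponding injective term to be projective. I would stress that this route is self-contained: it uses neither the left-right symmetry of dominant dimension nor the deeper implication ${\pd}\,M=n+1\Rightarrow{\pd}({\soc}\,M)=n+1$ recalled above; indeed, the harder half of that socle dictionary comes out as a consequence of the characterization rather than as an ingredient in its proof.
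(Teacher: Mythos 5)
Your argument is correct, and it is necessarily a different route from the paper's, because the paper offers no proof of this proposition at all: it is imported verbatim as \cite[Corollary 4.2]{LMZ}. What you have written is therefore a self-contained replacement, and both of its pillars check out. The inequality ${\pd}({\soc}\,M)\le\max({\pd}\,M,n)$ does follow from the long exact sequence attached to $0\to{\soc}\,M\to M\to M/{\soc}\,M\to0$, using only ${\gl}\,\Lambda\le n+1$; note that it already yields ${\prinj}(\Lambda)\subseteq\mathcal{C}$ without any hypothesis on dominant dimension. In the forward direction, the nonvanishing of ${\Ext}^{d}_{\Lambda}(S,\Lambda)$ for $d={\pd}\,S\ge1$ holds because $\phi$ being a radical map forces the image of $\phi^{*}$ into ${\Hom}_{\Lambda}(P_{d},\mathrm{rad}\,\Lambda)$, a proper submodule of ${\Hom}_{\Lambda}(P_{d},\Lambda)$ by Nakayama's lemma (the case $d=0$ is the trivial observation that a projective simple is a summand of $\Lambda$), and the Bass-number description of the minimal injective resolution is available since $\End_{\Lambda}(S)=k$ over the algebraically closed base field. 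In the converse, the cosyzygy induction is sound: ${\soc}\,I^{j}={\soc}\,C^{j}$ because $C^{j}\subseteq I^{j}$ is essential, and $I^{j}$ projective gives ${\pd}\,C^{j+1}\le{\pd}\,C^{j}+1$. Your closing remark is also worth keeping: this derivation does not presuppose the stronger dictionary of Proposition 2.3 (that ${\pd}\,M=n+1$ forces ${\pd}({\soc}\,M)=n+1$), so the characterization is obtained from elementary homological bookkeeping rather than from the machinery of \cite{LMZ}.
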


Denote by $P^{\leq n}(\Lambda)$ the subcategory consisting of all $\Lambda$-modules with projective dimension at most $n$.
\begin{prop}{\rm \cite[Corollary 4.4]{LMZ}}
Let $\Lambda$ be an algebra. Then $\Lambda$ is an $n$-Auslander algebra if and only if it satisfies

\qquad \qquad $P^{\leq n}(\Lambda)=\{X\in \Lambda\text{-}{\mo}\,|\,{\pd(\soc}\,X)\leq n\}={\sub}\,\Lambda.$
\end{prop}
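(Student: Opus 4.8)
The plan is to prove both implications by reducing everything to three nested inclusions together with the characterisation already recorded in Proposition 2.2. Throughout write $\mathcal{X}=\{X\in\Lambda\text{-}\mo\mid \pd(\soc X)\le n\}$. For the ``only if'' direction I would assume $\Lambda$ is $n$-Auslander, so $\gl\Lambda\le n+1$ and $\dom\Lambda\ge n+1$; I treat the non-degenerate case $\gl\Lambda=n+1$, the semisimple case being immediate. The goal is the cyclic chain $P^{\le n}(\Lambda)\subseteq\mathcal{X}\subseteq\sub\Lambda\subseteq P^{\le n}(\Lambda)$, which forces all three subcategories to coincide.

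For $P^{\le n}(\Lambda)\subseteq\mathcal{X}$ I would argue contrapositively: apply $\Hom_{\Lambda}(-,Y)$ to $0\to\soc X\to X\to X/\soc X\to 0$; since $\gl\Lambda\le n+1$ gives $\Ext^{n+2}_{\Lambda}(X/\soc X,Y)=0$, the connecting map $\Ext^{n+1}_{\Lambda}(X,Y)\to\Ext^{n+1}_{\Lambda}(\soc X,Y)$ is surjective, so $\pd(\soc X)=n+1$ forces $\pd X=n+1$. For $\mathcal{X}\subseteq\sub\Lambda$, if $\pd(\soc X)\le n$ then $\soc X$ is a sum of simples each of which, by Proposition 2.2 applied to its injective envelope, has a projective injective envelope; hence $E(X)=E(\soc X)$ is projective and the embedding $X\hookrightarrow E(X)$ shows $X\in\sub\Lambda$. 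Finally $\sub\Lambda\subseteq P^{\le n}(\Lambda)$ follows because any $X\in\sub\Lambda$ embeds in a projective $P$, and $0\to X\to P\to P/X\to 0$ together with $\gl\Lambda\le n+1$ yields $\pd X\le\max(0,\pd(P/X)-1)\le n$.

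For the ``if'' direction assume the two equalities hold. Since every first syzygy $\Omega Y$ lies in $\sub\Lambda=P^{\le n}(\Lambda)$, one gets $\pd(\Omega Y)\le n$ for all $Y$, whence $\gl\Lambda\le n+1$. Next I would establish the key equivalence that a simple module $S$ satisfies $\pd S\le n$ if and only if $E(S)$ is projective-injective: indeed $\pd S\le n$ places $E(S)$ in $\mathcal{X}=\sub\Lambda$, and an injective module embedded in a projective splits off, so $E(S)$ is projective; the converse is immediate since projective modules lie in $\sub\Lambda=\mathcal{X}$. The same splitting argument upgrades this to $\prinj(\Lambda)=\{I\ \text{injective}\mid\pd(\soc I)\le n\}$, so when $\gl\Lambda=n+1$ Proposition 2.2 gives that $\Lambda$ is $n$-Auslander. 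The remaining case $\gl\Lambda\le n$ makes $\sub\Lambda=\Lambda\text{-}\mo$, so that every indecomposable injective, embedding in a projective, is projective; counting indecomposable injectives against indecomposable projectives then forces $\Lambda$ to be quasi-Frobenius and hence, having finite global dimension, semisimple, so trivially $n$-Auslander.

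I expect the main obstacle to be the ``if'' direction, specifically recovering the dominant-dimension bound $\dom\Lambda\ge n+1$. My route deliberately sidesteps a direct analysis of the socles of the injective terms $I^{0},\dots,I^{n}$ in the minimal injective resolution of $\Lambda$ by feeding the equality $\prinj(\Lambda)=\{I\ \text{injective}\mid\pd(\soc I)\le n\}$ into Proposition 2.2; the price is that Proposition 2.2 is stated under the hypothesis $\gl\Lambda=n+1$, so genuine care is needed at the boundary where $\gl\Lambda<n+1$, which is exactly the degenerate semisimple situation disposed of by the counting argument above.
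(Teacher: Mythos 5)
Your argument is correct in substance, but note that it cannot be compared with a proof ``in the paper'': the paper gives no proof of this proposition and simply cites \cite[Corollary 4.4]{LMZ}. What you have done is reconstruct the statement from Proposition 2.2 (itself only cited, as \cite[Corollary 4.2]{LMZ}) together with elementary homological algebra, and the individual steps check out: the surjectivity of the restriction map $\Ext^{n+1}_{\Lambda}(X,Y)\rightarrow\Ext^{n+1}_{\Lambda}({\soc}\,X,Y)$ forced by $\Ext^{n+2}_{\Lambda}(X/{\soc}\,X,Y)=0$ gives $P^{\leq n}(\Lambda)\subseteq\mathcal{X}$; the fact that the injective envelope of $X$ coincides with that of ${\soc}\,X$, combined with Proposition 2.2, gives $\mathcal{X}\subseteq{\sub}\,\Lambda$; the syzygy estimate from $0\rightarrow X\rightarrow P\rightarrow P/X\rightarrow 0$ gives ${\sub}\,\Lambda\subseteq P^{\leq n}(\Lambda)$; and in the converse direction the splitting-off of injectives embedded in projectives recovers exactly the description of ${\prinj}(\Lambda)$ that Proposition 2.2 requires. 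Since Corollaries 4.2 and 4.4 of \cite{LMZ} are companion statements, this is very likely the intended logical route, and it has the merit of making the dependence between the two cited results explicit rather than treating them as independent black boxes.

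The one place that needs tightening is the degenerate case of the ``only if'' direction. You dismiss ${\gl}\,\Lambda\leq n$ as ``the semisimple case'' and later say it is ``disposed of by the counting argument above,'' but that counting argument takes ${\sub}\,\Lambda=\Lambda\text{-}{\mo}$ as its starting point, which in the ``only if'' direction is precisely part of what must be proved, not something you may assume. What is actually needed is the standard observation that ${\dom}\,\Lambda\geq n+1$ together with ${\gl}\,\Lambda\leq n$ forces $\Lambda$ to be self-injective: the minimal injective coresolution $0\rightarrow\Lambda\rightarrow I^{0}\rightarrow\cdots\rightarrow I^{d}\rightarrow 0$ has length $d\leq{\gl}\,\Lambda\leq n$, all of its terms are projective by the dominant dimension hypothesis, and splitting it off from the right exhibits $\Lambda$ as a direct summand of an injective module; self-injectivity together with finite global dimension then yields semisimplicity, after which all three subcategories are all of $\Lambda\text{-}{\mo}$. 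With that short lemma inserted, your proof is complete.
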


The above proposition shows that a module over an $n$-Auslander algebra has projective dimension $n+1$ if and only if its socle has projective dimension $n+1$.

\begin{definition} \rm
A pair $(\mathcal{T},\mathcal{F})$ of subcategories of $\Lambda$-{\mo} is called a \emph{torsion pair} if the following conditions are satisfied:

(1) ${\Hom}_{\Lambda}(X,Y)=0$ for all $X\in\mathcal{T}, \,Y\in{\mathcal{F}}$.

(2) ${\Hom}_{\Lambda}(X,\mathcal{F})=0$ implies $X\in \mathcal{T}$.

(3) ${\Hom}_{\Lambda}(\mathcal{T},Y)=0$ implies $Y\in \mathcal{F}$.
\end{definition}

If $(\mathcal{T},\mathcal{F})$ is a torsion pair, $\mathcal{T}$ is called a \emph{torsion class} and $\mathcal{F}$ is called a \emph{torsionfree class}. Moreover, we have $\mathcal{T}^{\perp}=\mathcal{F}$ and $^{\perp}\mathcal{F}=\mathcal{T}$.

\begin{definition} \rm
A triple $(\mathcal{C},\mathcal{T},\mathcal{F})$ of subcategories of $\Lambda$-{\mo} is called a \emph{torsion-torsionfree triple} (or \emph{TTF-triple} for short) if both $(\mathcal{C},\mathcal{T})$ and $(\mathcal{T},\mathcal{F})$ are torsion pairs in $\Lambda$-{\mo}.
\end{definition}

If $(\mathcal{C},\mathcal{T},\mathcal{F})$ is a TTF-triple, $\mathcal{T}$ is called a \emph{TTF-class}.

Next we show the existence of the TTF-triple in the category of modules over higher Auslander algebras.

For an $n$-Auslander algebra $\Lambda$, there exists a basic projective-injective $\Lambda$-module $Q$ such that ${\add}\,Q={\prinj}(\Lambda)$. Since the dominant dimension ${\dom}\,\Lambda\geq1$, any indecomposable projective $\Lambda$-module can be embedded into $Q$ and by Proposition 2.3, we have $${\sub}\,Q={\sub}\,\Lambda=P^{\leq n}(\Lambda).$$

Denote by $\upsilon$ the Nakayama functor of $\Lambda$-{\mo}. Due to the isomorphism ${\Hom}_{\Lambda}(-,Q)\cong D{\Hom}_{\Lambda}(\upsilon^{-1}Q,-)$, we have $^{\perp}Q=(\upsilon^{-1}Q)^{\perp}$. Since $Q$ is projective-injective such that ${\add}\,Q={\prinj}(\Lambda)$, by Proposition 2.2, the socle of $Q$ is exactly the direct sum of all simple modules with projective dimension at most $n$. For a $\Lambda$-module $X$, if ${\Hom}_{\Lambda}(X,Q)=0$, it has no composition factors with projective dimension at most $n$. Thus we have $$^{\perp}Q=\{X\in \Lambda\text{-}{\mo}\,|\, {\pd}\,S=n+1 \;\text{for any composition factor} \;S\; \text{of}\; X\}.$$

In particular, any simple module with projective dimension $n+1$ belongs to $^{\perp}Q$. For a $\Lambda$-module $X$ with projective dimension $n+1$, it either belongs to  $^{\perp}Q$ or has a composition factor with projective dimension at most $n$.

\begin{prop}
Let $\Lambda$ be an $n$-Auslander algebra and $Q$ be a basic projective-injective module such that ${\add}\,Q={\prinj}(\Lambda)$. Then the following statements hold:

{\rm(1)} $({\fac}\,(\upsilon^{-1}Q),\,^{\perp}Q,\, {\sub}\,Q)$ is a TTF-triple in $\Lambda$-${\mo}$.

{\rm(2)} Let $M$ be an $n$-cluster tilting module over an algebra $A$ such that $\Lambda\cong{\End}_{A}M$. Then there exists an equivalence between ${\fac}\,(\upsilon^{-1}Q)\cap{\sub}\,Q$ and $A\text{-}{\mo}$.

{\rm(3)} The global dimension ${\gl}\,A\leq n$ if and only if ${\fac}\,(\upsilon^{-1}Q)\subseteq{\sub}\,Q$.
\end{prop}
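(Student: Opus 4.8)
The plan is to establish the two torsion pairs separately and then read off parts (2) and (3). Throughout I use that $\upsilon^{-1}Q$ is projective, being the inverse Nakayama translate of the injective module $Q$; write $\upsilon^{-1}Q=\Lambda f$ for an idempotent $f$, so that $^{\perp}Q=(\upsilon^{-1}Q)^{\perp}=\{Y\in\Lambda\text{-}{\mo}\mid fY=0\}$, and I use the identities ${\sub}\,Q={\sub}\,\Lambda=P^{\leq n}(\Lambda)$ together with the composition-factor description of $^{\perp}Q$ recorded just before the statement. For (1) I verify the three conditions of Definition 2.4 for each pair. For $(^{\perp}Q,{\sub}\,Q)$: condition (1) holds because any $Y\in{\sub}\,Q$ embeds into some $Q^{k}$, whence ${\Hom}_{\Lambda}(X,Y)\hookrightarrow{\Hom}_{\Lambda}(X,Q)^{k}=0$ for $X\in{}^{\perp}Q$; condition (2) is immediate since $Q\in{\sub}\,Q$; and for condition (3), if $Y\notin{\sub}\,Q=P^{\leq n}(\Lambda)$ then $Y$ has a simple submodule $S$ with ${\pd}\,S=n+1$, so $S\in{}^{\perp}Q$ and the inclusion $S\hookrightarrow Y$ shows ${\Hom}_{\Lambda}({}^{\perp}Q,Y)\neq0$. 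For $({\fac}\,(\upsilon^{-1}Q),{}^{\perp}Q)$ I use the trace identity ${\rm tr}_{\Lambda f}(X)=\Lambda fX$: conditions (1) and (3) follow at once from $\Lambda f\in{\fac}\,(\upsilon^{-1}Q)$ and $fY=0$, while for condition (2) one observes $f(X/\Lambda fX)=0$, so $X/\Lambda fX\in{}^{\perp}Q$ and the hypothesis ${\Hom}_{\Lambda}(X,{}^{\perp}Q)=0$ forces $X=\Lambda fX\in{\fac}\,(\upsilon^{-1}Q)$.

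For (2), the plan is to use the fully faithful functor attached to the $n$-cluster tilting module $M$ (namely ${\Hom}_{A}(-,M)$, a duality onto its image, post-composed with $D$ to match the left-module convention) and to identify its essential image with ${\fac}\,(\upsilon^{-1}Q)\cap{\sub}\,Q$. Concretely, the indecomposable summands of $Q$ are the ${\Hom}_{A}(M_{i},M)$ with $M_{i}$ injective, and $\upsilon^{-1}Q=\bigoplus_{M_{i}\text{ inj}}{\Hom}_{A}(M_{i},M)$; hence, by Proposition 2.1, membership in ${\fac}\,(\upsilon^{-1}Q)$ is the condition that the projective cover of a module lie in ${\add}\,(\upsilon^{-1}Q)$, i.e. that its top be supported on the simples $S_{i}$ with $M_{i}$ injective, while membership in ${\sub}\,Q=P^{\leq n}(\Lambda)$ is the corresponding condition on the socle. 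Applying ${\Hom}_{A}(-,M)$ to a minimal injective copresentation of an $A$-module and invoking the (co)resolution results of $[{\rm I2}]$ exactly as in the proof of Proposition 2.1 shows that the image lands in both classes; establishing essential surjectivity and full faithfulness then yields the equivalence.

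For (3), I reformulate the containment: since ${\sub}\,Q=({}^{\perp}Q)^{\perp}$ is the torsionfree class of $({}^{\perp}Q,{\sub}\,Q)$, one has ${\fac}\,(\upsilon^{-1}Q)\subseteq{\sub}\,Q$ if and only if ${\Hom}_{\Lambda}({}^{\perp}Q,{\fac}\,(\upsilon^{-1}Q))=0$, equivalently every module whose top is supported on the injective-type simples has projective dimension at most $n$. I then transport this across the equivalence of (2): using the projective-dimension dictionary between $A$-mod and ${\fac}\,(\upsilon^{-1}Q)\cap{\sub}\,Q$, computed from the $n$-cluster-tilting (co)resolutions of $[{\rm I2}]$ as in Proposition 2.1, the containment holds precisely when every $A$-module has projective dimension at most $n$, that is, when ${\gl}\,A\leq n$. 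I expect the main obstacle to lie entirely in (2) and (3): pinning down the essential image of the cluster-tilting functor as exactly ${\fac}\,(\upsilon^{-1}Q)\cap{\sub}\,Q$, and pinning down the precise relation between ${\pd}_{A}\,N$ and the projective dimension over $\Lambda$ of the associated module. Once this dictionary is in place, (2) and (3) follow formally, whereas (1) is comparatively routine.
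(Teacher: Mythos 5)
Your treatment of part (1) is correct and complete: verifying the three torsion-pair axioms directly from the idempotent description $\upsilon^{-1}Q=\Lambda f$, $^{\perp}Q=\{Y\mid fY=0\}$, together with the trace argument for condition (2) of the pair $({\fac}\,(\upsilon^{-1}Q),{}^{\perp}Q)$ and the socle argument (via Proposition 2.3) for condition (3) of the pair $({}^{\perp}Q,{\sub}\,Q)$, is a sound and in fact more self-contained route than the paper's, which instead quotes Jans's theorem that $^{\perp}Q$ is a TTF-class whenever $Q$ contains the injective envelope of $\Lambda$ and then only checks the two outer classes. Nothing is missing there.

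Parts (2) and (3), however, are plans rather than proofs, and the steps you defer are exactly the ones carrying the mathematical content. For (2) you propose to identify the essential image of the (contravariant) functor ${\Hom}_{A}(-,M)$ with ${\fac}\,(\upsilon^{-1}Q)\cap{\sub}\,Q$, but you never establish essential surjectivity, and you yourself flag this as the main obstacle; note also that ${\Hom}_{A}(-,M)$ is a duality onto its image, so some care with variance is needed before one can even assert an equivalence with $A$-mod rather than with $A^{\rm op}$-mod. The paper sidesteps all of this: it computes ${\End}_{\Lambda}(\upsilon^{-1}Q)\cong A$ from $\upsilon^{-1}Q\cong{\Hom}_{A}(DA,M)$ and the full faithfulness of ${\Hom}_{A}(-,M)$ (dual of \cite[Lemma 4.2]{KZ}), and then invokes Gentle's general result \cite[Corollary 3.3]{G} that the heart $\mathcal{C}\cap\mathcal{F}$ of a TTF-triple is equivalent to the module category over the endomorphism ring of the associated projective. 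For (3) your plan has an additional structural problem: the modules that would witness the failure of ${\fac}\,(\upsilon^{-1}Q)\subseteq{\sub}\,Q$ lie in ${\fac}\,(\upsilon^{-1}Q)\setminus{\sub}\,Q$, i.e.\ outside the subcategory governed by the equivalence of (2), so "transporting across the equivalence" cannot detect them; and the "projective-dimension dictionary" relating ${\pd}_{A}$ to ${\pd}_{\Lambda}$ that you assume is itself a nontrivial theorem. The paper instead reduces the containment to the statement that $\upsilon^{-1}Q$ has no composition factor of projective dimension $n+1$ (using that ${\sub}\,Q=P^{\leq n}(\Lambda)$ is closed under submodules), and then translates this into ${\gl}\,A\leq n$ via \cite[Theorem 1.20]{I3}, \cite[Lemma 2.3(b)]{I3} and \cite[Proposition 1.4]{I4}. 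Without these ingredients, or a worked-out substitute, (2) and (3) remain unproved.
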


\begin{proof}
{\rm(1)} The dominant dimension ${\dom}\,\Lambda\geq1$ implies that the injective envelope of $\Lambda$ is contained in ${\add}\,Q$. According to \cite[Theorem 3.1]{J}, we know that $^{\perp}Q=\{X\in\Lambda\text{-}{\mo}\,|\,{\Hom}_{\Lambda}(X,Q)=0\}$ is a TTF-class. We only need to prove ${\sub}\,Q=(^{\perp}Q)^{\perp}$ and ${\fac}\,(\upsilon^{-1}Q)=$$^{\perp}(^{\perp}Q)$.

If $X\in(^{\perp}Q)^{\perp}$, we have ${\Hom}_{\Lambda}(S,X)=0$ for any simple module $S$ with projective dimension $n+1$. Then the socle of $X$ has projective dimension at most $n$. By Proposition 2.3, $X\in {\sub}\,Q$ and this shows $(^{\perp}Q)^{\perp}\subseteq{\sub}\,Q$. Assume $Y\in{\sub}\,Q$. There exists a monomorphism $0\rightarrow Y \rightarrow Q^{d}$ for some integer $d\geq0$. Then we have $0\rightarrow {\Hom}_{\Lambda}(^{\perp}Q,Y)\rightarrow {\Hom}_{\Lambda}(^{\perp}Q,Q^{d})=0$ and $Y\in(^{\perp}Q)^{\perp}$ which means ${\sub}\,Q\subseteq (^{\perp}Q)^{\perp}$. Thus ${\sub}\,Q=(^{\perp}Q)^{\perp}$ and  $(^{\perp}Q,\, {\sub}\,Q)$ is a torsion pair.

If $X\in$$^{\perp}(^{\perp}Q)$, we have ${\Hom}_{\Lambda}(X,S)=0$ for any simple module $S$ with projective dimension $n+1$. Then the top of $X$ has projective dimension at most $n$. By Proposition 2.2, ${\add}(\upsilon^{-1}Q)$ consists of all the projective modules whose tops have projective dimension at most $n$. Thus $X$ is generated by $\upsilon^{-1}Q$ and $^{\perp}(^{\perp}Q)\subseteq{\fac}\,(\upsilon^{-1}Q)$. Assume $Y\in{\fac}\,(\upsilon^{-1}Q)$. There exists an epimorphism $(\upsilon^{-1}Q)^{d}\rightarrow Y\rightarrow 0$ for some integer $d\geq0$. Then we have $0\rightarrow {\Hom}_{\Lambda}(Y,(\upsilon^{-1}Q)^{\perp})\rightarrow {\Hom}_{\Lambda}((\upsilon^{-1}Q)^{d},(\upsilon^{-1}Q)^{\perp})=0$ and ${\Hom}_{\Lambda}(Y,(\upsilon^{-1}Q)^{\perp})=0$ which means ${\fac}\,(\upsilon^{-1}Q)\subseteq $$^{\perp}((\upsilon^{-1}Q)^{\perp})=^{\perp}$$(^{\perp}Q)$. Thus ${\fac}\,(\upsilon^{-1}Q)=$$^{\perp}(^{\perp}Q)$ and $({\fac}\,(\upsilon^{-1}Q),\,^{\perp}Q)$ is a torsion class.

{\rm(2)} By Proposition 2.2, the top of $\upsilon^{-1}Q$ is exactly the direct sum of all simple modules with projective dimension at most $n$. Then according to Proposition 2.1, we have $\upsilon^{-1}Q\cong{\Hom}_{A}(DA,M)$. Since $M$ is a cogenerator, it follows from the dual of \cite[Lemma 4.2]{KZ} that ${\Hom}_{A}(-,M)$ is fully faithful. Then we have ${\End}_{\Lambda}\upsilon^{-1}Q\cong{\End}_{\Lambda}{\Hom}_{A}(DA,M) \cong{\Hom}_{A}(DA,DA)\cong A.$ Note that $\upsilon^{-1}Q$ is a projective generator in ${\fac}\,(\upsilon^{-1}Q)$. According to \cite[Corollary 3.3]{G}, the subcategory ${\fac}\,(\upsilon^{-1}Q)\cap{\sub}\,Q$ is equivalent to the category $A$-{\mo} of $A$-modules.

{\rm(3)} For the injective $\Lambda$-module $D\Lambda$, there is a decomposition $D\Lambda\cong Q\oplus I$ where the injective module $I$ has no non-zero projective direct summands. By Proposition 2.2, the socle of $I$ is exactly the direct sum of all the simple modules with projective dimension $n+1$. It follows from \cite[Theorem 1.20]{I3} that ${\gl}\,A\leq n$ if and only if ${\Ext}_{\Lambda}^{i}(I,\Lambda)=0$ for any $1\leq i \leq n$.

Assume ${\Ext}_{\Lambda}^{i}(I,\Lambda)=0$ for any $1\leq i \leq n$. Then we have ${\Hom}_{\Lambda}(I,\Lambda)=0$ by \cite[Lemma 2.3(b)]{I3}. So $I\in\!^{\perp}\Lambda\subseteq\!^{\perp}Q=(\upsilon^{-1}Q)^{\perp}$ and ${\Hom}_{\Lambda}(\upsilon^{-1}Q,I)=0$. This implies that $\upsilon^{-1}Q$ has no composition factors with projective dimension $n+1$. Thus any module generated by $\upsilon^{-1}Q$ has projective dimension at most $n$. It follows that ${\fac}\,(\upsilon^{-1}Q)\subseteq P^{\leq n}(\Lambda)={\sub}\,Q$.

Now assume ${\fac}\,(\upsilon^{-1}Q)\subseteq{\sub}\,Q$. Since the torsionfree class ${\sub}\,Q=P^{\leq n}(\Lambda)$ is closed under submodules, any submodule of the modules in ${\fac}\,(\upsilon^{-1}Q)$ has projective dimension at most $n$. Thus $\upsilon^{-1}Q$ has no composition factors with projective dimension $n+1$. Then ${\Hom}_{\Lambda}(\upsilon^{-1}Q,I)=0$ and $I\in(\upsilon^{-1}Q)^{\perp}=\!^{\perp}Q$. Since $\Lambda$ is an $n$-Auslander algebra, according to \cite[Proposition 1.4]{I4}, any simple module $S$ with projective dimension $n+1$ satisfies ${\Ext}^{i}_{\Lambda}(S,\Lambda)=0$ for $1\leq i \leq n$. Note that $^{\perp}\,Q$ consists of modules all of whose composition factors have projective dimension $n+1$. Then $I\in\!^{\perp}Q$ satisfies ${\Ext}^{i}_{\Lambda}(I,\Lambda)=0$ for $1\leq i \leq n$. As a result, we have ${\gl}\,A\leq n$.
\end{proof}

\begin{remark}\rm
{\rm (1)} Proposition 2.6(2) shows that if $A$ is representation-infinite, so is $\Lambda$.

{\rm(2)} A module is called to be \emph{torsionless} if it can be cogenerated by a projective module. An algebra is called to be \emph{torsionless-finite} if there are finitely many non-isomorphic indecomposable torsionless modules. By Proposition 2.6(2), if $\Lambda$ is torsionless-finite, $A$ is representation-finite.

{\rm(3)} If ${\gl}\,A\leq n$, there exists an equivalence ${\fac}\,(\upsilon^{-1}Q)\cong A\text{-}{\mo}$.
\end{remark}

We give an example to show that the inclusion ${\fac}\,(\upsilon^{-1}Q)\subseteq{\sub}\,Q$ does not always hold.

\begin{example} \rm
Let $\Lambda$ be an algebra given by the following quiver with relations $\alpha_{1}\alpha_{2}=\alpha_{2}\alpha_{3}=\alpha_{4}\alpha_{5}=\alpha_{5}\alpha_{6}=0$.

\qquad\qquad \qquad \qquad $\small{1\xleftarrow{\alpha_{1}}2\xleftarrow{\alpha_{2}}3\xleftarrow{\alpha_{3}}4\xleftarrow{\alpha_{4}}5\xleftarrow{\alpha_{5}}6\xleftarrow{\alpha_{6}}7}$

\noindent Then $\Lambda$ is a 2-Auslander algebra and $Q={2\atop 1}\oplus{3\atop 2}\oplus\begin{smallmatrix} 5\\4\\3 \end{smallmatrix}\oplus{6\atop 5}\oplus{7\atop 6}$. The Auslander-Reiten quiver of $\Lambda$ is as follows.

\noindent $
\begin{tikzpicture}[->][scale=.85]
\node(s1) at (0,0) {$\begin{smallmatrix} 1 \end{smallmatrix}$};
\node(s2) at (2,0) {$\begin{smallmatrix} 2 \end{smallmatrix}$};
\node(s3) at (4,0) {$\begin{smallmatrix} 3 \end{smallmatrix}$};
\node(s4) at (6,0) {$\begin{smallmatrix} 4 \end{smallmatrix}$};
\node(s5) at (8,0) {$\begin{smallmatrix} 5 \end{smallmatrix}$};
\node(s6) at (10,0){$\begin{smallmatrix} 6 \end{smallmatrix}$};
\node(s7) at (12,0){$\begin{smallmatrix} 7 \end{smallmatrix}$};
\node(t1) at (1,1) {$\begin{smallmatrix} 2\\1 \end{smallmatrix}$};
\node(t2) at (3,1) {$\begin{smallmatrix} 3\\2 \end{smallmatrix}$};
\node(t3) at (5,1) {$\begin{smallmatrix} 4\\3 \end{smallmatrix}$};
\node(t4) at (7,1) {$\begin{smallmatrix} 5\\4 \end{smallmatrix}$};
\node(t5) at (9,1) {$\begin{smallmatrix} 6\\5 \end{smallmatrix}$};
\node(t6) at (11,1){$\begin{smallmatrix} 7\\6 \end{smallmatrix}$};
\node(r1) at (6,2) {$\begin{smallmatrix} 5\\4\\3 \end{smallmatrix}$};

\draw (s1)--(t1);
\draw (t1)--(s2);
\draw (s2)--(t2);
\draw (t2)--(s3);
\draw (s3)--(t3);
\draw (t3)--(s4);
\draw (t3)--(r1);
\draw (r1)--(t4);
\draw (s4)--(t4);
\draw (t4)--(s5);
\draw (s5)--(t5);
\draw (t5)--(s6);
\draw (s6)--(t6);
\draw (t6)--(s7);
\end{tikzpicture}
$

\noindent We have ${\fac}\,(\upsilon^{-1}Q)={\add}\,(\begin{smallmatrix} 1\end{smallmatrix}\oplus \begin{smallmatrix} 2\end{smallmatrix}\oplus \begin{smallmatrix}3 \end{smallmatrix}\oplus \begin{smallmatrix} 5 \end{smallmatrix}\oplus \begin{smallmatrix} 6\end{smallmatrix}\oplus{2\atop 1}\oplus{3\atop 2}\oplus{5\atop 4}\oplus{6\atop 5}\oplus\begin{smallmatrix} 5\\4\\3 \end{smallmatrix})$,

\qquad $^{\perp}Q={\add}\,(\begin{smallmatrix} 4\end{smallmatrix}\oplus \begin{smallmatrix} 7\end{smallmatrix})$ and

\qquad ${\sub}\,Q={\add}\,(\begin{smallmatrix} 1\end{smallmatrix}\oplus\begin{smallmatrix} 2\end{smallmatrix}\oplus\begin{smallmatrix} 3\end{smallmatrix}\oplus\begin{smallmatrix} 5\end{smallmatrix}\oplus\begin{smallmatrix} 6\end{smallmatrix}\oplus\begin{smallmatrix} 2\\1\end{smallmatrix}\oplus\begin{smallmatrix} 3\\2\end{smallmatrix}\oplus\begin{smallmatrix} 4\\3\end{smallmatrix}\oplus\begin{smallmatrix} 6\\5\end{smallmatrix}\oplus\begin{smallmatrix} 7\\6\end{smallmatrix}\oplus\begin{smallmatrix} 5\\4\\3 \end{smallmatrix})$.

\end{example}

\section{Representation-finiteness of higher Auslander algebras}
In this section, we investigate the representation-finiteness of $n$-Auslander algebras in terms of modules with projective dimension $n+1$.

The following result connects projective covers and injective envelopes in short exact sequences.

\begin{prop}{\rm \cite[Lemma 2.6]{HZ}}
Let $0\rightarrow X\xrightarrow{f} Q\xrightarrow{g} Y\rightarrow 0$ be a non-split short exact sequence where $Q$ is a projective-injective module. Then the following statements are equivalent.

{\rm (1)} $Y$ is indecomposable and $g$ is a projective cover of $Y$.

{\rm (2)} $X$ is indecomposable and $f$ is an injective envelope of $X$.
\end{prop}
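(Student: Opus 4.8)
The plan is to exploit the self-duality of the statement. Applying the standard duality $D={\Hom}_{k}(-,k)$ to the given sequence produces a non-split short exact sequence $0\rightarrow DY\xrightarrow{Dg} DQ\xrightarrow{Df} DX\rightarrow 0$ in which $DQ$ is again projective-injective; moreover $D$ is a contravariant equivalence, so it preserves indecomposability and interchanges projective covers with injective envelopes (equivalently, it interchanges superfluous submodules with essential ones). Reading the proposition for this dual sequence, condition (1) becomes exactly condition (2) of the original sequence and vice versa. Hence it suffices to prove the single implication (1)$\Rightarrow$(2), for then applying it to the dual sequence yields (2)$\Rightarrow$(1).

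So assume $Y$ is indecomposable and $g$ is a projective cover. I would first show that $f$ is an injective envelope. Since $Q$ is injective and $f$ is a monomorphism, the injective envelope $E(X)$ of $X$ may be realized as a direct summand of $Q$, say $Q=E(X)\oplus Q'$ with $X\subseteq E(X)$. Taking cokernels of the inclusion $X\hookrightarrow Q$ then gives $Y\cong E(X)/X\oplus Q'$. Because $Y$ is indecomposable, one of the two summands is zero. If $E(X)/X=0$ then $X=E(X)$ is a direct summand of $Q$ and the sequence splits, contrary to hypothesis; therefore $Q'=0$, so $X$ is essential in $Q=E(X)$ and $f$ is an injective envelope.

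It remains to see that $X$ is indecomposable. Suppose instead $X=X_{1}\oplus X_{2}$ with both summands nonzero. Since $Q=E(X)=E(X_{1})\oplus E(X_{2})$, the same cokernel computation gives $Y\cong E(X_{1})/X_{1}\oplus E(X_{2})/X_{2}$, and indecomposability of $Y$ forces one quotient, say $E(X_{2})/X_{2}$, to vanish; thus $X_{2}=E(X_{2})$ is injective. But then $X_{2}=E(X_{2})$ is a nonzero direct summand of $Q$ contained in $\ker g=\mathrm{im}\,f$. Since $g$ is a projective cover, $\ker g$ is superfluous in $Q$, and from $\ker g+E(X_{1})=Q$ one would deduce $E(X_{1})=Q$, i.e. $X_{2}=0$, a contradiction. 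Hence $X$ is indecomposable, which completes (1)$\Rightarrow$(2) and, by the duality reduction, the proposition.

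The step I expect to require the most care is the passage from a decomposition of $X$ (or of $Q$) to the induced decomposition of $Y$, namely the identifications $Q=E(X)\oplus Q'$ and $Y\cong E(X_{1})/X_{1}\oplus E(X_{2})/X_{2}$, since these rest on the fact that the injective envelope of a submodule of an injective module splits off as a direct summand. The only other point needing attention is the correct use of the superfluous-kernel characterization of projective covers to exclude injective direct summands of $X$; once these are in place, the duality reduction is what keeps the argument short.
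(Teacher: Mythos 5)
Your argument is correct. Note, however, that the paper does not prove this proposition at all: it is quoted verbatim from Huang--Zhang \cite[Lemma 2.6]{HZ}, so there is no in-paper proof to compare against. Your write-up is a valid self-contained replacement: the duality reduction via $D=\Hom_{k}(-,k)$ (which swaps projective covers with injective envelopes and preserves indecomposability, at the cost of passing to the opposite algebra, which is harmless since the statement is algebra-independent) correctly reduces everything to (1)$\Rightarrow$(2); the splitting $Q=E(X)\oplus Q'$ together with $Y\cong E(X)/X\oplus Q'$ and the non-split hypothesis forces $Q=E(X)$; and the exclusion of injective direct summands of $X$ via the superfluousness of $\ker g$ is exactly the right use of the projective-cover hypothesis. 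The only point worth stating explicitly is the one you flag yourself: that an injective envelope of a submodule of an injective module embeds as a direct summand (extend $X\hookrightarrow Q$ over $X\hookrightarrow E(X)$ and use essentiality to see the extension is monic), and that $E(X_{1}\oplus X_{2})=E(X_{1})\oplus E(X_{2})$ for finitely generated modules; both are standard.
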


Now we prove the main result of this section.
\begin{thm}
 Let $\Lambda$ be an $n$-Auslander algebra. Then $\Lambda$ is representation-finite if and only if the number of non-isomorphic indecomposable $\Lambda$-modules with projective dimension $n+1$ is finite.
\end{thm}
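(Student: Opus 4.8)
The forward direction is immediate: if $\Lambda$ is representation-finite, then in particular there are only finitely many indecomposable modules of any prescribed projective dimension, including $n+1$. So the content lies entirely in the converse. The plan is to show that the finiteness of indecomposables of projective dimension $n+1$ forces finiteness of \emph{all} indecomposables, by controlling the two ``other'' pieces of $\Lambda$-$\mo$: the modules of projective dimension at most $n$ (equivalently, by Proposition 2.3, the torsionfree class $\sub\,Q = P^{\leq n}(\Lambda)$) and the projective-injective modules. The overarching strategy is that every indecomposable either has projective dimension $n+1$, or it has projective dimension $\leq n$ and therefore should be recoverable as a syzygy inside the projective resolution of some module of projective dimension $n+1$ — this is exactly the claim flagged in the introduction (``every indecomposable non-projective module with projective dimension at most $n$ occurs as a syzygy in the projective resolution of those with projective dimension $n+1$'').

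First I would reduce to the indecomposable non-projective modules of projective dimension $\leq n$; there are only finitely many indecomposable projectives, so it suffices to bound these. Let $X$ be such a module, with minimal projective resolution $0 \to P_t \to \cdots \to P_0 \to X \to 0$ where $t = \pd X \leq n$. The key move is to look at the cokernel/extension data one step up: I want to realize $X$ (or rather, place $X$ in the resolution of a module of projective dimension exactly $n+1$). The natural device is Proposition 3.1, applied to short exact sequences $0 \to \Omega \to Q' \to Y \to 0$ with $Q'$ projective-injective: it matches indecomposable modules with indecomposable submodules via injective envelopes, with the projective-injective $Q$ as the linking object. Since $\dom\,\Lambda \geq n+1 \geq 1$, every indecomposable has its injective envelope in $\add\,Q$, so such sequences are available. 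The plan is to use Proposition 3.1 to set up a correspondence (an injection, up to finitely many exceptions) from the indecomposables of projective dimension $\leq n$ into the indecomposables of projective dimension $n+1$, by sending $X$ to the cokernel of its injective envelope $X \hookrightarrow Q'$, whose socle — living outside $\sub\,Q$ — then has projective dimension $n+1$, hence by Proposition 2.3 the cokernel itself has projective dimension $n+1$.

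The main obstacle I expect is \emph{well-definedness and finiteness of the fibers} of this correspondence: the map $X \mapsto \mathrm{coker}(X \hookrightarrow Q')$ need not be injective on the nose, since different $X$ could produce isomorphic cokernels, and one must check that each cokernel of projective dimension $n+1$ arises from only finitely many indecomposable submodules. Here Proposition 3.1 is precisely the lever — it guarantees that when the cokernel $Y$ is indecomposable with $Q' \to Y$ a projective cover, the embedded submodule $X$ is indecomposable with $Q'$ its injective envelope, pinning $X$ down as the first syzygy $\Omega Y$. So the argument should run: reduce finiteness to the first-syzygy image of the (finitely many) projective dimension $n+1$ indecomposables, observe that $\Omega Y$ is controlled because $Q'$ ranges over the finitely many summands of $Q$, and iterate the syzygy construction at most $n$ times to sweep up every indecomposable of smaller projective dimension. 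The delicate bookkeeping is ensuring the iteration terminates and that at each stage the relevant modules genuinely have projective dimension $n+1$ (so Proposition 3.1 applies with $Q$ projective-injective); this is where the socle criterion of Proposition 2.3 and the description of $^{\perp}Q$ in terms of composition factors must be invoked carefully to certify that the syzygies being peeled off are exactly the indecomposables of projective dimension $\leq n$ and nothing is missed.
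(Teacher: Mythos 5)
Your proposal is essentially the paper's own proof: embed an indecomposable $X$ of projective dimension $i\leq n$ into its injective envelope $Q_{0}\in{\add}\,Q$ (available by Proposition 2.3 since $X\in{\sub}\,Q$), apply Proposition 3.1 to conclude that the cokernel $X_{1}$ is indecomposable with $Q_{0}\rightarrow X_{1}$ a projective cover, so that $X\cong\Omega X_{1}$ and ${\pd}\,X_{1}=i+1$, and then induct downward from $n+1$. The only slip is the intermediate claim that the cokernel of $X\hookrightarrow Q'$ already has projective dimension $n+1$ (its socle need not lie outside ${\sub}\,Q$; one step only raises the projective dimension from $i$ to $i+1$), but the iteration you describe at the end is exactly the paper's mechanism and repairs this.
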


\begin{proof}
If $\Lambda$ is representation-finite, there are finitely many non-isomorphic indecomposable modules with projective dimension $n+1$.

Now assume the number of non-isomorphic indecomposable $\Lambda$-modules with projective dimension $n+1$ is finite. Let $Q$ be a basic projective-injective module such that ${\add}\,Q={\prinj}(\Lambda)$. For an indecomposable module $X$ with projective dimension $i$ for some integer $1\leq i\leq n$, it follows from Proposition 2.3 that $X$ belongs to ${\sub}\,\Lambda={\sub}\,Q$. Then there exists a short exact sequence $0\rightarrow X\xrightarrow{f_{0}} Q_{0}\xrightarrow{g_{0}} X_{1}\rightarrow 0$ where $f_{0}$ is an injective envelope of $X$ and $Q_{0}\in{\add}\,Q$. According to Proposition 3.1, $X_{1}$ is indecomposable and $g_{0}$ is a projective cover of $X_{1}$. Note that the projective dimension of $X_{1}$ is $i+1$. This means that any indecomposable module with projective dimension $i$ is the syzygy of an indecomposable module with projective dimension $i+1$. Thus
if there are only finitely many indecomposable modules with projective dimension $i+1$, so are the indecomposable modules with projective dimension $i$. It follows from the assumption that the number of non-isomorphic indecomposable modules with projective dimension $i$ for $1\leq i \leq n$ is finite. This completes our proof.
\end{proof}

\begin{remark}\rm
For $X_{1}$ in the above proof, if ${\pd}\,X_{1}\leq n$, there exists a short exact sequence $0\rightarrow X_{1}\xrightarrow{f_{1}} Q_{1}\xrightarrow{g_{1}} X_{2}\rightarrow 0$ where $f_{1}$ is an injective envelope of $X_{1}$ and $Q_{1}\in{\add}\,Q$. Moreover, $X_{2}$ is indecomposable with ${\pd}\,X_{2}=i+2$ and $g_{1}$ is a projective cover of $X_{2}$. Continuing this procedure, we get a long exact sequence $$0\rightarrow X\rightarrow Q_{0}\rightarrow Q_{1}\rightarrow \cdots \rightarrow Q_{n-i}\rightarrow X_{n+1-i}\rightarrow 0$$ where $Q_{j}\in{\add}\,Q$ for $0\leq j\leq n-i$ and $X_{n+1-i}$ is indecomposable with ${\pd}\,X_{n+1-i}=n+1$. Then $X\cong\Omega^{n+1-i}X_{n+1-i}$ and any indecomposable non-projective module with projective dimension at most $n$ occurs as some syzygy in the minimal projective resolution of an indecomposable module with projective dimension $n+1$. Thus it is natural to consider whether the number of indecomposable modules with projective dimension $n+1$ can control the number of all indecomposable modules.
\end{remark}

We recover the following characterization of representation-finite Auslander algebras introduced by Zito in \cite{Z}.

\begin{corollary}{\rm (\cite[Theorem 1.2]{Z}) }
Let $\Lambda$ be an Auslander algebra. Then $\Lambda$ is representation-finite if and only if the number of non-isomorphic indecomposable modules with projective dimension $2$ is finite.
\end{corollary}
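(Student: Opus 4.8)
The plan is to deduce this directly from Theorem 3.2 by observing that the class of Auslander algebras coincides with the class of $1$-Auslander algebras. Recall that an algebra is an $n$-Auslander algebra precisely when ${\gl}\,\Lambda\leq n+1$ and ${\dom}\,\Lambda\geq n+1$; specializing to $n=1$ gives ${\gl}\,\Lambda\leq 2$ and ${\dom}\,\Lambda\geq 2$. By Auslander's characterization recalled in the introduction, these are exactly the defining conditions of an Auslander algebra. Hence the two notions agree, and in particular a module over an Auslander algebra has projective dimension at most $n+1=2$.

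First I would make this identification explicit: given an Auslander algebra $\Lambda$, it satisfies ${\gl}\,\Lambda\leq 2$ and ${\dom}\,\Lambda\geq 2$, so it is a $1$-Auslander algebra in the sense used in Theorem 3.2. Then I would apply Theorem 3.2 with $n=1$ verbatim: $\Lambda$ is representation-finite if and only if the number of non-isomorphic indecomposable $\Lambda$-modules with projective dimension $n+1=2$ is finite. This is exactly the assertion of the corollary, so no separate argument is required once the identification is in place.

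Since the proof is a straightforward specialization, I do not anticipate any genuine obstacle. The only point worth a brief remark is the degenerate case ${\gl}\,\Lambda<2$, for instance when $\Lambda$ is semisimple: there $\Lambda$ is automatically representation-finite and admits no indecomposable module of projective dimension $2$, so both sides of the equivalence hold vacuously and the statement still goes through. No further computation is needed.
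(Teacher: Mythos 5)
Your proposal is correct and matches the paper's treatment exactly: the paper states this as an immediate corollary of Theorem 3.2, relying on the identification (recalled in the introduction via Auslander's characterization) of Auslander algebras with $1$-Auslander algebras, which is precisely the specialization you carry out.
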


\begin{example}\rm
Let $\Lambda$ be an algebra given by the following quiver with relations $\alpha_{i}\alpha_{i+1}=0$ for $1\leq i\leq n$.

\qquad \qquad \qquad \qquad $\small{1\xleftarrow{\alpha_{1}}2\xleftarrow{\alpha_{2}}\cdots\xleftarrow{\alpha_{n}}n+1\xleftarrow{\alpha_{n+1}}n+2}$

Then $\Lambda$ is an $n$-Auslander algebra and the only indecomposable module with projective dimension $n+1$ is the simple module $S_{n+2}$. By Theorem 3.2, $\Lambda$ is of finite representation type. In fact, $\Lambda$ is representation-finite since it is a Nakayama algebra.
\end{example}

For a representation-finite $n$-Auslander algebra, we classify the modules with projective dimension $n+1$ by their injective dimensions and calculate the number of non-isomorphic indecomposable non-projective modules.

\begin{prop}
Let $\Lambda$ be a representation-finite $n$-Auslander algebra and $t_{i}$ be the number of non-isomorphic indecomposable $\Lambda$-modules with projective dimension $n+1$ and injective dimension $i$ for $0\leq i\leq n+1$. Then the number of non-isomorphic indecomposable non-projective $\Lambda$-modules is $\sum^{n+1}_{j=1}(n-j+2)t_{j}+(n+1)t_{0}$.
\end{prop}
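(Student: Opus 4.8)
The plan is to set up a bijection between indecomposable non-projective modules and certain pairs, and then count. For an indecomposable non-projective $Y$ with $\pd Y=p$ ($1\le p\le n+1$), Remark 3.3 gives $Y\cong\Omega^{n+1-p}M$ for an indecomposable $M$ with $\pd M=n+1$: one obtains $M$ from $Y$ by repeatedly forming injective envelopes, which lie in $\add Q$ by Proposition 2.3, and passing to cokernels; by Proposition 3.1 each cokernel is again indecomposable and the map is a projective cover, so the endpoint $M=\Phi(Y)$ is uniquely determined by $Y$. First I would record that $Y\mapsto(\Phi(Y),\,n+1-\pd Y)$ is a bijection from the indecomposable non-projective modules onto the pairs $(M,k)$ with $\pd M=n+1$, $0\le k\le n$, $\Omega^{k}M$ indecomposable and $\Phi(\Omega^{k}M)=M$; its inverse is $(M,k)\mapsto\Omega^{k}M$. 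Hence the total equals $\sum_{M}c(M)$, where $c(M)$ counts the admissible positions $k$, and everything reduces to expressing $c(M)$ through $\id M$.

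The heart of the matter is the interaction between the minimal projective resolution $0\to P_{n+1}\to\cdots\to P_{0}\to M\to 0$ and injective dimensions. Each step $0\to\Omega^{k+1}M\to P_{k}\to\Omega^{k}M\to 0$ is a projective cover of $\Omega^{k}M$, so by Proposition 3.1 it is an injective envelope of $\Omega^{k+1}M$ exactly when $P_{k}$ is projective-injective, in which case $\Omega^{k+1}M$ stays indecomposable. The key lemma I would prove is: for an indecomposable non-projective module $Z$, its projective cover $P(Z)$ is projective-injective if and only if $\id Z\le n$. I expect this to be the \emph{main obstacle}. I would prove it by duality. Since $\dom\Lambda=\dom\Lambda^{\op}$ and $\gl\Lambda=\gl\Lambda^{\op}$, the opposite algebra $\Lambda^{\op}$ is again an $n$-Auslander algebra, and the duality $D$ sends the projective cover of $Z$ to the injective envelope of $DZ$ and exchanges projective-injectives with projective-injectives. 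Thus $P(Z)$ is projective-injective iff $DZ$ is cogenerated by the basic projective-injective $\Lambda^{\op}$-module, iff (Proposition 2.3 applied to $\Lambda^{\op}$) $\pd_{\Lambda^{\op}}DZ\le n$, iff $\id_{\Lambda}Z\le n$.

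With the lemma available I would induct along the syzygies of $M$. When $P_{k}$ is projective-injective, $0\to\Omega^{k+1}M\to P_{k}\to\Omega^{k}M\to 0$ begins the minimal injective coresolution of $\Omega^{k+1}M$, so $\id\Omega^{k+1}M=\id\Omega^{k}M+1$. Starting from $\id M=i$ this yields $\id\Omega^{k}M=i+k$ (equivalently $\pd+\id$ is constant, equal to $n+1+i$, along the chain), as long as the earlier terms are projective-injective and the syzygies remain indecomposable. Combined with the lemma, $P_{k}$ is projective-injective iff $i+k\le n$; hence $P_{0},\dots,P_{n-i}$ are projective-injective while $P_{n+1-i}$ is not (for $i\ge 1$, using that $\Omega^{n+1-i}M$ is indecomposable and non-projective of projective dimension $i$). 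The admissible positions are therefore $k=0,1,\dots,\min(n+1-i,\,n)$, giving $c(M)=n-i+2$ when $i\ge 1$ and $c(M)=n+1$ when $i=0$.

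Finally I would sum over the indecomposable modules of projective dimension $n+1$, grouped by injective dimension; since injective dimension $i$ occurs $t_{i}$ times, the number of indecomposable non-projective $\Lambda$-modules equals $(n+1)t_{0}+\sum_{i=1}^{n+1}(n-i+2)t_{i}$, as claimed. The points that need care are the boundary case $i=0$, where the chain descends to projective dimension $1$ and the count caps at $n+1$, and the verification that $\Omega^{k}M$ stays indecomposable precisely over the admissible range; both are handled by the repeated use of Proposition 3.1 inside the induction.
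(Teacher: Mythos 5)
Your proof is correct and takes essentially the same route as the paper's: both arguments realize every indecomposable non-projective module as a syzygy of an indecomposable module of projective dimension $n+1$ (via Remark 3.3 and Proposition 3.1) and use the dual of Proposition 2.3 --- which is exactly your key lemma, there invoked by duality rather than derived by passing to the opposite algebra --- to show that the injective dimension increases by one along each syzygy and hence that precisely the terms $P_{0},\dots,P_{n-i}$ of the minimal projective resolution are projective-injective when the injective dimension is $i$. Your packaging of the count as a bijection with pairs $(M,k)$, versus the paper's direct count of the non-projective terms occurring in each minimal projective resolution, is only a cosmetic difference.
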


\begin{proof}
According to the dual of Proposition 2.3, a $\Lambda$-module $X$ has injective dimension at most $n$ if and only if it is generated by an injective module, or equivalently, by a projective-injective module. Then by Remark 3.3, any indecomposable non-projective module with projective dimension at most $n$ occurs in the minimal projective resolution of an indecomposable module with projective dimension $n+1$ and injective dimension at most $n$.

Let $Y$ be an indecomposable $\Lambda$-module with ${\pd}\,Y=n+1$ and ${\id}\,Y=i$ for some integer $1\leq i\leq n$. Consider the following minimal projective resolution of $Y$.
$$0\rightarrow P_{n+1}\rightarrow \cdots \rightarrow P_{n-i+1}\rightarrow P_{n-i}\rightarrow \cdots\rightarrow P_{0}\rightarrow Y\rightarrow 0$$
Since ${\id}\,Y\leq n$, $P_{0}$ is projective-injective. It follows from Proposition 3.1 that $\Omega Y$ is indecomposable and the monomorphism $\Omega Y\rightarrow P_{0}$ is an injective envelope of $\Omega Y$. Moreover, we have ${\id}\,\Omega Y=i+1$. Continuing this procedure, we get

(1) $P_{j}$ is projective-injective for $0\leq j\leq n-i$ and $P_{n-i+1}$ is not injective.

(2) $\Omega^{j}Y$ is indecomposable and ${\id}\,\Omega^{j}Y=i+j$ for $1\leq j\leq n-i+1$.

(3) The monomorphism $\Omega^{j}Y\rightarrow P_{j-1}$ is an injective envelope of $\Omega^{j}Y$ for $1\leq j\leq n-i+1$.

Following Remark 3.3, the number of indecomposable non-projective modules occurring in the minimal projective resolution of $Y$ is $n-i+1$.

Let $I$ be an indecomposable injective $\Lambda$-module with ${\pd}\,I=n+1$. Consider the following minimal projective resolution of $I$.
$$0\rightarrow P^{'}_{n+1}\rightarrow  P^{'}_{n}\rightarrow\cdots\rightarrow P^{'}_{1}\rightarrow P^{'}_{0}\rightarrow I\rightarrow 0$$
We know that $P^{'}_{j}$ is projective-injective for $0\leq j\leq n$ and $P^{'}_{n+1}$ is not injective. Since $\Omega^{n+1}I=P^{'}_{n+1}$ is projective, the number of indecomposable non-projective modules occurring in the minimal projective resolution of $I$ is $n$.

Thus for the representation-finite $n$-Auslander algebra $\Lambda$, the number of non-isomorphic indecomposable non-projective modules is

$\sum^{n}_{j=1}(n-j+1)t_{j}+nt_{0}+\sum^{n+1}_{j=0}t_{j}=\sum^{n+1}_{j=1}(n-j+2)t_{j}+(n+1)t_{0}.$
\end{proof}

In Example 3.5, we have $t_{0}=1$ and $t_{j}=0$ for $1\leq j\leq n+1$. Thus the number of non-isomorphic indecomposable non-projective modules is $n+1$.

\section{Higher Auslander algebras of linearly oriented type $\mathbb{A}$}
In this section, we provide a classification of the representation-finite higher Auslander algebras of linearly oriented type $\mathbb{A}$. Moreover, we calculate the number of non-isomorphic indecomposable modules over these algebras.

\subsection{Quivers of higher Auslander algebras of linearly oriented type $\mathbb{A}$}
An algebra $\Lambda$ is called \emph{$n$-representation finite} if it admits an $n$-cluster tilting module $M$ and its global dimension ${\gl}\,\Lambda\leq n$. For an $n$-representation finite algebra $\Lambda$, the basic $n$-cluster tilting module $M$ is unique and the endomorphism algebra ${\End}_{\Lambda}M$ is called the \emph{$n$-Auslander algebra of $\Lambda$}. Higher Auslander algebras of linearly oriented type $\mathbb{A}$ introduced by Iyama in \cite{I3} are a remarkable family of $n$-representation finite algebras for $n\geq1$. These algebras are constructed recursively as follows.

Let $A_{m}$ be the linearly oriented quiver $\small{1\rightarrow 2\rightarrow \cdots \rightarrow m}$ and $A_{m}^{1}=kA_{m}$ be the path algebra of $A_{m}$. Then $A_{m}^{1}$ is a $1$-representation finite algebra and the unique basic $1$-cluster tilting module $_{A_{m}^{1}}M$ is the additive generator of the category $A_{m}^{1}\text{-}{\mo}$. Denote by $A_{m}^{2}={\End}_{A_{m}^{1}}M$ the Auslander algebra of $A_{m}^{1}$. Then $A_{m}^{2}$ is a $2$-representation finite algebra and there exists a unique basic $2$-cluster tilting module $_{A_{m}^{2}}M$. Continuing this procedure, it follows from \cite[Theorem 1.18]{I3} that the $(n-1)$-Auslander algebra $A_{m}^{n}={\End}_{A_{m}^{n-1}}M$ of $A_{m}^{n-1}$ is an $n$-representation finite algebra and there exists a unique basic $n$-cluster tilting module $_{A_{m}^{n}}M$. The algebra $A_{m}^{n}$ is called the \emph{$(n-1)$-Auslander algebra of linearly oriented type $A_{m}$}. For these algebras, Iyama gave explicit descriptions of their quivers with relations, see \cite[Section 6]{I3} for details. In this paper, we follow the descriptions of $A_{m}^{n}$ in \cite{HJ} with a slight modification.

Set $\mathcal{V}_{m}^{n}=\{(x_{1},x_{2},\cdots,x_{n})\in\mathbb{Z}^{n}\,|\,1\leq x_{1}<x_{2}<\cdots<x_{n}\leq m+n-1\}$. For $x\in\mathcal{V}_{m}^{n}$ and $i\in\{1,2,\cdots,n\}$, denote $$x+e_{i}=(x_{1},\cdots,x_{i-1},x_{i}+1,x_{i+1},\cdots,x_{n})$$ if $x+e_{i}$ is still in $\mathcal{V}_{m}^{n}$ where $e_{1},e_{2},\cdots,e_{n}$ is the standard basis of $\mathbb{Z}^{n}$.

Now define a quiver $Q_{m}^{n}$ with vertices $\mathcal{V}_{m}^{n}$ and arrows $a_{i}(x):x\rightarrow x+e_{i}$ for $x\in\mathcal{V}_{m}^{n}$ and $i\in\{1,2,\cdots,n\}$. For $i\neq j$ and $x\in\mathcal{V}_{m}^{n}$, define a relation
$$\rho_{ij}^{x}=a_{j}(x+e_{i})a_{i}(x)-a_{i}(x+e_{j})a_{j}(x).$$
When $x+e_{i}$ or $x+e_{j}$ is not in $\mathcal{V}_{m}^{n}$, we have a zero relation $\rho_{ij}^{x}=a_{i}(x+e_{j})a_{j}(x)$ or $\rho_{ij}^{x}=a_{j}(x+e_{i})a_{i}(x)$.

Let $I_{m}^{n}$ be the ideal of the path algebra $kQ_{m}^{n}$ generated by all $\rho_{ij}^{x}$. Then the Auslander-Reiten quiver of ${\add}_{A_{m}^{n-1}}M$(see \cite[Definition 6.1]{I3}) is $Q_{m}^{n}$ and $A_{m}^{n}=kQ_{m}^{n}/I_{m}^{n}$.
Here it is different from that in \cite{HJ}. Because we consider left modules and use the functor ${\Hom}_{A_{m}^{n-1}}(-,_{A_{m}^{n-1}}M):A_{m}^{n-1}\text{-}{\mo}\rightarrow A_{m}^{n}\text{-}{\mo}$, the quiver of $A_{m}^{n}={\End}_{A_{m}^{n-1}}M$ is the same as the Auslander-Reiten quiver of ${\add}_{A_{m}^{n-1}}M$.

\begin{example}\rm
(1) The quiver $Q_{2}^{4}$ of the algebra $A_{2}^{4}$ is as follows.
{\small$${1234\rightarrow 1235\rightarrow 1245\rightarrow 1345\rightarrow 2345}$$}
(2) The quiver $Q_{3}^{3}$ of the algebra $A_{3}^{3}$ is as follows.
{\small$$\xymatrix{
&&145\ar[r]&245\ar[r]&345\\
&125\ar[r]&135\ar[r]\ar[u]&235\ar[u]&\\
123\ar[r]&124\ar[u]\ar[r]&134\ar[u]\ar[r]&234\ar[u]&
}$$}
(3) The quiver $Q_{3}^{4}$ of the algebra $A_{3}^{4}$ is as follows.
{\small$$\xymatrix{
&&&1456\ar[r]&2456\ar[r]&3456\\
&&1256\ar[r]&1356\ar[u]\ar[r]&2356\ar[u]&\\
&1236\ar[r]&1246\ar[u]\ar[r]&1346\ar[u]\ar[r]&2346\ar[u]&\\
1234\ar[r]&1235\ar[u]\ar[r]&1245\ar[u]\ar[r]&1345\ar[r]\ar[u]&2345\ar[u]&
}$$}

\end{example}
For the set $\mathcal{V}_{m}^{n}$, consider two subsets $$\mathcal{V}_{m}^{n'}=\{(x_{1},x_{2},\cdots,x_{n})\in\mathcal{V}_{m}^{n}\,|\, x_{n}=m+n-1\}\ \text{and}$$ $$\mathcal{V}_{m}^{n''}=\mathcal{V}_{m}^{n}\setminus \mathcal{V}_{m}^{n'}=\{(x_{1},x_{2},\cdots,x_{n})\in\mathcal{V}_{m}^{n}\,|\, x_{n}<m+n-1\}.$$ Denote by $Q_{m}^{n'}$ and $Q_{m}^{n''}$ the full subquivers of $Q_{m}^{n}$ consisting of vertices $\mathcal{V}_{m}^{n'}$ and $\mathcal{V}_{m}^{n''}$, respectively. It is easily checked that the map $$(x_{1},\cdots,x_{n-1})\rightarrow (x_{1},\cdots,x_{n-1}, m+n-1)$$ from $\mathcal{V}_{m}^{n-1}$ to $\mathcal{V}_{m}^{n}$ induces an isomorphism $Q_{m}^{n-1}\cong Q_{m}^{n'}$. Meanwhile, the isomorphism $Q_{m-1}^{n}\cong Q_{m}^{n''}$ is given by the inclusion $\mathcal{V}_{m-1}^{n}\subseteq \mathcal{V}_{m}^{n}$. Thus the quiver $Q_{m}^{n}$ is a disjoint union of $Q_{m-1}^{n}$ and $Q_{m}^{n-1}$ with additional arrows $(x_{1},\cdots,x_{n-1},m+n-2)\rightarrow (x_{1},\cdots,x_{n-1})$ where $(x_{1},\cdots,x_{n-1},m+n-2)$ runs through all vertices of this form in $\mathcal{V}_{m-1}^{n}$.

\begin{prop}
Let $_{A_{m}^{n}}Q$ be a basic projective-injective $A_{m}^{n}$-module such that ${\add}_{A_{m}^{n}}Q={\prinj}(A_{m}^{n})$. Then there exist two equivalences ${\fac}(\upsilon^{-1}_{A_{m}^{n}}Q)\cong A_{m}^{n-1}\text{-}{\mo}$ and $^{\perp}(_{A_{m}^{n}}Q)\cong A_{m-1}^{n}\text{-}{\mo}$.
\end{prop}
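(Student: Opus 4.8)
The plan is to treat the two equivalences separately, using that $A_m^n={\End}_{A_m^{n-1}}M$ is the $(n-1)$-Auslander algebra attached to the $(n-1)$-cluster tilting module $M={}_{A_m^{n-1}}M$, so that Propositions 2.1--2.6 and Remark 2.7 apply with base algebra $A=A_m^{n-1}$ and the parameter $n-1$ in place of $n$. Under this reindexing, a simple $A_m^n$-module has projective dimension at most $n-1$ if its corresponding indecomposable summand of $M$ is injective and projective dimension $n$ otherwise (Proposition 2.1), and $^{\perp}Q$ is the subcategory of modules all of whose composition factors have projective dimension $n$.

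For the first equivalence I would invoke Remark 2.7(3). Since $A_m^{n-1}$ is $(n-1)$-representation finite, its global dimension is at most $n-1$, which is exactly the hypothesis of Proposition 2.6(3) read with parameter $n-1$. Hence ${\fac}(\upsilon^{-1}Q)\subseteq{\sub}\,Q$, and combining this inclusion with the equivalence ${\fac}(\upsilon^{-1}Q)\cap{\sub}\,Q\cong A_m^{n-1}\text{-}{\mo}$ of Proposition 2.6(2) gives ${\fac}(\upsilon^{-1}Q)={\fac}(\upsilon^{-1}Q)\cap{\sub}\,Q\cong A_m^{n-1}\text{-}{\mo}$ at once. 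This step is essentially formal once the indexing is fixed.

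For the second equivalence the idea is to realise $^{\perp}Q$ as a module category over a quotient of $A_m^n$. As recalled before Proposition 2.6, $^{\perp}Q$ is a Serre subcategory (indeed a TTF-class, by \cite[Theorem 3.1]{J}), namely the modules whose composition factors are the simples of projective dimension $n$; these are exactly the modules over the quotient algebra $A_m^n/\langle e_x\mid x\in\mathcal{V}_m^{n'}\rangle$ obtained by annihilating the idempotents at the vertices carrying the simples of projective dimension at most $n-1$. By Proposition 2.1 those vertices are precisely the ones whose summand of $M$ is an injective $A_m^{n-1}$-module, and by the combinatorial description of \cite{I3,HJ} the indecomposable injective $A_m^{n-1}$-modules correspond exactly to $\mathcal{V}_m^{n'}=\{x\in\mathcal{V}_m^{n}\mid x_n=m+n-1\}$. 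Thus the quiver of the quotient is the full subquiver of $Q_m^n$ on the complement $\mathcal{V}_m^{n''}$, which is $Q_{m-1}^n$ via the isomorphism $Q_{m-1}^n\cong Q_m^{n''}$ established above; matching the induced relations with $I_{m-1}^n$ then identifies the quotient with $A_{m-1}^n$ and yields $^{\perp}Q\cong A_{m-1}^n\text{-}{\mo}$.

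I expect the main obstacle to be this last identification: verifying that killing $\mathcal{V}_m^{n'}$ returns exactly $A_{m-1}^n$, and not merely an algebra with the correct quiver. Two points need care. First, one must confirm that the simples of projective dimension at most $n-1$ sit on $\mathcal{V}_m^{n'}$, which rests on pinning down the injective summands of $M$ in the combinatorial model; I would extract this from \cite{I3,HJ} and can cross-check it directly in the Auslander case $n=2$. Second, one must check that the relations induced on the full subquiver $\mathcal{V}_m^{n''}$ coincide with $I_{m-1}^n$: the only subtlety occurs at the boundary $x_n=m+n-2$, where the arrows $a_n$ of $Q_m^n$ leave $\mathcal{V}_m^{n''}$ and are deleted in the quotient, so that the commutativity relations $\rho_{nj}^x$ of $A_m^n$ degenerate to exactly the zero relations of $I_{m-1}^n$. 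Since all additional arrows of $Q_m^n$ run from $\mathcal{V}_m^{n''}$ into $\mathcal{V}_m^{n'}$, no relation surviving in the quotient involves a deleted vertex, and the identification with $A_{m-1}^n$ goes through.
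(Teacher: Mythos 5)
Your proposal is correct and follows essentially the same route as the paper: the first equivalence via Proposition 2.6(2) and (3) using ${\gl}\,A_{m}^{n-1}\leq n-1$, and the second by identifying $^{\perp}(_{A_{m}^{n}}Q)$ with the modules supported on $\mathcal{V}_{m}^{n''}$ and invoking the quiver isomorphism $Q_{m-1}^{n}\cong Q_{m}^{n''}$. Your explicit check that the induced relations on $\mathcal{V}_{m}^{n''}$ degenerate to those of $I_{m-1}^{n}$ is a point the paper leaves implicit, but it is the same argument.
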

\begin{proof}
Since the global dimension ${\gl}\,A_{m}^{n-1}\leq n-1$, the first equivalence follows from Proposition 2.6 (2) and (3).

According to \cite[Theorem 2.16(2)]{HJ}, the vertices $\mathcal{V}_{m}^{n''}$ in $Q_{m}^{n}$ correspond to all the indecomposable non-injective direct summands of $_{A_{m}^{n-1}}M$. By Proposition 2.1, the simple $A_{n}^{m}$-modules associated to the vertices $\mathcal{V}_{m}^{n''}$ have projective dimension $n$. Let $S^{''}$ be the set of all the simple $A_{n}^{m}$-modules associated to the vertices $\mathcal{V}_{m}^{n''}$. Then we have

$^{\perp}(_{A_{m}^{n}}Q)=\{X\in A_{m}^{n}\text{-}{\mo}\,|\, {\pd}\,S=n \;\text{for any composition factor} \;S\; \text{of}\; X\}$

\qquad\quad\; $=\{X\in A_{m}^{n}\text{-}{\mo}\,|\, \text{Any composition factor of}\; X\; \text{belongs to} \; S^{''}\}.$

Since there is an isomorphism between the quiver $Q_{m-1}^{n}$ of $A_{m-1}^{n}$ and the subquiver $Q_{m}^{n''}$, we get the second equivalence  $^{\perp}(_{A_{m}^{n}}Q)\cong A_{m-1}^{n}\text{-}{\mo}$.
\end{proof}

\begin{remark} \rm
The above proof also shows that any simple $A_{n}^{m}$-modules associated to the vertices $\mathcal{V}_{m}^{n'}$ has projective dimension at most $n-1$.
\end{remark}

For an $A_{m}^{n}$-module $X$ with projective dimension $n$, by Proposition 2.3, its socle has projective dimension $n$. Then $X$ either belongs to $^{\perp}(_{A_{m}^{n}}Q)$ or has a composition factor with projective dimension at most $n-1$. In order to classify representation-finite $A_{m}^{n}$, we study the number of indecomposable modules in  $^{\perp}(_{A_{m}^{n}}Q)$ and those admitting composition factors with projective dimension at most $n-1$.

\subsection{Representation-finite higher Auslander algebras of linearly oriented type $\mathbb{A}$ }
Since $A_{1}^{n}$ is semisimple for $n\geq 1$ and $A_{m}^{1}$ is the path algebra of linearly oriented quiver $A_{m}$ for $m\geq 1$, they are representation-finite. Thus we only consider $A_{m}^{n}$ for $m,n\geq 2$. Denote by $|{\ind}\,A_{m}^{n}|$ the number of non-isomorphic indecomposable $A_{m}^{n}$-modules.

\begin{prop}
The $(n-1)$-Auslander algebra $A_{2}^{n}$ of linearly oriented type $A_{2}$ is representation-finite and $|{\ind}\,A_{2}^{n}|=2n+1$.
\end{prop}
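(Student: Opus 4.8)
The plan is to identify $A_2^n$ explicitly as a radical-square-zero Nakayama algebra and then read off its indecomposables directly. First I would pin down the quiver $Q_2^n$. The set $\mathcal{V}_2^n$ consists of the strictly increasing $n$-tuples in $\{1,\ldots,n+1\}$, and each such tuple is determined by the unique element it omits, so $|\mathcal{V}_2^n|=n+1$. I would then check that the vertex omitting $k$ carries exactly one outgoing arrow — the one incrementing the coordinate sitting immediately below the gap at $k$ — and that this arrow lands at the vertex omitting $k-1$, while the vertex omitting $1$ is a sink. Hence $Q_2^n$ is the linearly oriented quiver $v_0\rightarrow v_1\rightarrow\cdots\rightarrow v_n$ of type $A_{n+1}$, where $v_j$ denotes the tuple omitting $n+1-j$; this is also visible from the recursive description, in which $Q_2^n$ is the one-vertex quiver $Q_1^n$ joined to $Q_2^{n-1}$ by a single arrow.

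Next I would compute the relations. Any two consecutive arrows $v_{\ell-1}\rightarrow v_\ell\rightarrow v_{\ell+1}$ increment two distinct coordinates, giving a pair $i\neq j$; evaluating the defining element $\rho_{ij}^{x}$ at the source $x=v_{\ell-1}$, one of the two increments is the available arrow while the other, $x+e_i$ say, leaves $\mathcal{V}_2^n$ (at $x$ those two coordinates are already consecutive integers). Hence $\rho_{ij}^{x}$ reduces to the zero relation making the length-two path $v_{\ell-1}\rightarrow v_\ell\rightarrow v_{\ell+1}$ vanish. Since every vertex has at most one outgoing arrow, no genuine commutativity relation survives, so $I_2^n=\mathrm{rad}^2(kQ_2^n)$ and $A_2^n$ is the radical-square-zero algebra on the linearly oriented $A_{n+1}$ quiver.

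With the algebra identified, the count is immediate: $A_2^n$ is a connected Nakayama algebra with $\mathrm{rad}^2=0$, hence representation-finite, and its indecomposable modules are exactly the $n+1$ simples $S_{v_0},\ldots,S_{v_n}$ together with the $n$ length-two uniserials $\begin{smallmatrix}v_j\\v_{j+1}\end{smallmatrix}$ for $0\leq j\leq n-1$ (the non-simple indecomposable projectives). This gives $|{\ind}\,A_2^n|=(n+1)+n=2n+1$.

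The only real work lies in the relation computation: one must track the degeneracy conventions for $\rho_{ij}^{x}$ carefully enough to be certain that every length-two path dies and that the orientation agrees with the $\Hom$-convention fixed in Section 4.1. As an independent check on the total, Proposition 4.2 gives $^{\perp}(_{A_2^n}Q)\cong A_1^n\text{-}{\mo}$, which is semisimple with a single simple module: indeed $\mathcal{V}_2^{n''}=\{(1,\ldots,n)\}$ has one element, so by Proposition 2.1 there is a unique simple of projective dimension $n$, and as it admits no self-extension $^{\perp}(_{A_2^n}Q)$ contributes exactly this one indecomposable. The same TTF decomposition, together with the equivalence ${\fac}(\upsilon^{-1}_{A_2^n}Q)\cong A_2^{n-1}\text{-}{\mo}$ of Proposition 4.2, is what an inductive treatment of the general case would build on.
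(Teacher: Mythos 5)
Your proof is correct, and its core coincides with the paper's: both arguments rest on identifying $A_{2}^{n}$ as the radical-square-zero Nakayama algebra on a linearly oriented quiver of type $A_{n+1}$. The paper dismisses that identification with ``it is easy to check,'' whereas you actually carry it out from the combinatorial definition of $Q_{2}^{n}$ and the relations $\rho_{ij}^{x}$ --- your verification that each vertex omitting $k\geq 2$ has the unique outgoing arrow to the vertex omitting $k-1$, and that every degenerate $\rho_{ij}^{x}$ kills the corresponding length-two path, is exactly the omitted check and is accurate. The only genuine divergence is the final count: the paper invokes its Proposition 3.6 (with $t_{0}=1$, $t_{j}=0$ for $j\geq 1$, giving $n$ non-projective indecomposables plus $n+1$ projectives), while you enumerate the indecomposables of a radical-square-zero Nakayama algebra directly as the $n+1$ simples and the $n$ length-two uniserials. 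Your route is more elementary and self-contained; the paper's route illustrates the general counting formula it has just established. Both yield $2n+1$, and your cross-check via $^{\perp}(_{A_{2}^{n}}Q)\cong A_{1}^{n}\text{-}{\mo}$ is consistent with Proposition 4.2.
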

\begin{proof}
It is easy to check that $A_{2}^{n}$ is the Nakayama algebra given by the quiver $\small{1\xleftarrow{\alpha_{1}}2\xleftarrow{\alpha_{2}}\cdots \xleftarrow{\alpha_{n-1}}n\xleftarrow{\alpha_{n}}n+1}$ with relations $\alpha_{i}\alpha_{i+1}=0$ for $1\leq i\leq n-1$. Then $A_{2}^{n}$ is representation-finite. According to Proposition 3.6, $t_{0}=1$ and the number of indecomposable non-projective modules is $nt_{0}=n$. Thus we have $|{\ind}\,A_{2}^{n}|=n+(n+1)=2n+1$.
\end{proof}

\begin{prop}
The $(n-1)$-Auslander algebra $A_{3}^{n}$ of linearly oriented type $A_{3}$ is representation-finite and $|{\ind}\,A_{3}^{n}|=\frac{(n+1)(n^{2}+5n+3)}{3}$.
\end{prop}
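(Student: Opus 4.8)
The plan is to reduce the count to an enumeration of the indecomposable $A_3^n$-modules of maximal projective dimension $n$, following the scheme set up by Theorem 3.2 and Proposition 3.6. Since $A_3^n$ is an $(n-1)$-Auslander algebra, Theorem 3.2 tells us that representation-finiteness is equivalent to finiteness of the class of indecomposables of projective dimension $n$; and once finiteness is known, Proposition 3.6 (with $n$ replaced by $n-1$) expresses the number of non-projective indecomposables as $\sum_{j=1}^{n}(n-j+1)t_j+n\,t_0$, where $t_j$ denotes the number of indecomposable modules with $\pd=n$ and $\id=j$. Adding the $|\mathcal{V}_{3}^{n}|=\binom{n+2}{2}=\frac{(n+1)(n+2)}{2}$ indecomposable projectives, the target value $\frac{(n+1)(n^{2}+5n+3)}{3}$ is equivalent to the identity $\sum_{j=1}^{n}(n-j+1)t_j+n\,t_0=\frac{n(n+1)(2n+7)}{6}$. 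Thus the entire problem is to determine, for each $j$, the number $t_j$ of indecomposables of projective dimension $n$ and injective dimension $j$.

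To enumerate the modules of projective dimension $n$, I would use the dichotomy recorded just before this subsection: by Proposition 2.3 such a module has its socle concentrated among the $n+1$ simples of projective dimension $n$ (those indexed by $\mathcal{V}_{3}^{n''}$), so it either lies in $^{\perp}({}_{A_{3}^{n}}Q)$ or has a composition factor of projective dimension at most $n-1$. The first class is governed by Proposition 4.3: $^{\perp}({}_{A_{3}^{n}}Q)\cong A_{2}^{n}\text{-}\mo$, and by Proposition 4.5 this category has exactly $2n+1$ indecomposables, all of projective dimension $n$. The second class consists of the indecomposables that genuinely straddle the gluing $Q_{3}^{n}=Q_{2}^{n}\sqcup Q_{3}^{n-1}$ together with the connecting arrows described above; I would enumerate these directly from the explicit quiver $Q_{3}^{n}$ with relations $I_{3}^{n}$, identifying its vertices with the $2$-element subsets $\{a,b\}\subseteq\{1,\dots,n+2\}$ omitted from $(x_{1},\dots,x_{n})$. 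Carrying out this enumeration produces a finite explicit list, which already yields representation-finiteness through Theorem 3.2, so no separate finiteness argument is needed.

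The step I expect to be the main obstacle is the determination of the injective dimensions \emph{inside} $A_{3}^{n}$, that is, the sorting of the enumerated modules into the correct $t_j$. The equivalences $^{\perp}({}_{A_{3}^{n}}Q)\cong A_{2}^{n}\text{-}\mo$ and $\fac(\upsilon^{-1}{}_{A_{3}^{n}}Q)\cong A_{3}^{n-1}\text{-}\mo$ are equivalences of abelian categories and do \emph{not} preserve injective dimension computed in the ambient algebra, so each $\id$ must be read off from $Q_{3}^{n}$ itself. For this I would use the dual of Proposition 2.3, namely that a module has injective dimension at most $n-1$ precisely when it is generated by a projective-injective module, together with the recursive description of $\prinj(A_{3}^{n})$ and the position of each vertex relative to the connecting arrows; this should allow $\id$ to be computed module by module and the $t_j$ to be assembled. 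Substituting the resulting values verifies the identity $\sum_{j=1}^{n}(n-j+1)t_j+n\,t_0=\frac{n(n+1)(2n+7)}{6}$ and hence the claimed count. As a consistency check, the final formula satisfies $\frac{(n+1)(n^{2}+5n+3)}{3}-\frac{n(n^{2}+3n+1)}{3}-\cdots$ reducing to the recursion $|\ind\,A_{3}^{n}|=|\ind\,A_{3}^{n-1}|+(n^{2}+3n+1)$, which reflects the embedding $\fac(\upsilon^{-1}{}_{A_{3}^{n}}Q)\cong A_{3}^{n-1}\text{-}\mo$ and offers an alternative inductive route in which one instead counts the indecomposables not generated by $\upsilon^{-1}{}_{A_{3}^{n}}Q$.
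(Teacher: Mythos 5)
Your reduction is exactly the one the paper uses: representation-finiteness via Theorem 3.2, the count via Proposition 3.6 with $n$ replaced by $n-1$, the splitting of the indecomposables of projective dimension $n$ into those lying in $^{\perp}({}_{A_{3}^{n}}Q)\cong A_{2}^{n}\text{-}{\mo}$ (correctly counted as $2n+1$, and correctly observed to all have $\pd=n$ since their socles do) and those with a composition factor of projective dimension at most $n-1$, together with the warning that injective dimensions must be read off inside $A_{3}^{n}$ itself. Your bookkeeping is also consistent with the answer: the identity $\sum_{j=1}^{n}(n-j+1)t_{j}+n\,t_{0}=\frac{n(n+1)(2n+7)}{6}$ is precisely what must be verified, and it holds for the values $t_{i}=n+1-i$ ($0\leq i\leq n$) that the enumeration eventually produces.

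The gap is that the step you flag as ``the main obstacle'' is the entire mathematical content of the proof, and you do not carry it out. You never produce the list of indecomposables of projective dimension $n$ outside $^{\perp}({}_{A_{3}^{n}}Q)$, never show that there are only finitely many of them --- and finiteness of the first class alone proves nothing, since Theorem 3.2 requires finiteness of \emph{all} indecomposables of maximal projective dimension --- and never compute a single $t_{j}$. The paper does this by drawing the quiver $Q_{3}^{n}$ explicitly and exhibiting, vertex by vertex, an explicit list of $1+2+\cdots+(n-1)=\binom{n}{2}$ indecomposables of projective dimension $n$ with a composition factor of smaller projective dimension, each with its injective dimension determined; combined with the $2n+1$ modules inside $^{\perp}({}_{A_{3}^{n}}Q)$ (the $n+1$ injectives of injective dimension $0$ and the simples $S_{\overline{i(n+2)}}$ of injective dimension $n+1-i$), this gives $\frac{(n+1)(n+2)}{2}$ modules of projective dimension $n$ distributed as $t_{i}=n+1-i$. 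Without that explicit enumeration, and without an argument that the list is exhaustive, neither the finiteness assertion nor the numerical identity is established; as written, your proposal is a correct reduction plus a statement of what remains to be proved, not a proof.
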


\begin{proof}
For simplicity, we denote by $\overline{pq}$ the vertex $(x_{1},x_{2},\cdots,x_{n})$ in $\mathcal{V}_{3}^{n}$ such that $x_{i}\neq p,q$ for $1\leq i\leq n$ if $1\leq p<q\leq n+2$. Then the quiver of $A_{3}^{n}$ is as follows.

\noindent
\begin{tikzpicture}[->][scale=.85]
\node(s1) at (0,0) {\tiny$\overline{(n+1)(n+2)}$};
\node(s2) at (2,0) {\tiny$\overline{n(n+2)}$};
\node(s3) at (4,0) {\tiny$\overline{(n-1)(n+2)}$};
\node(s4) at (5.7,0) {\tiny$\cdots$};
\node(s5) at (7,0) {\tiny$\overline{3(n+2)}$};
\node(s6) at (8.5,0) {\tiny$\overline{2(n+2)}$};
\node(s7) at (10,0) {\tiny$\overline{1(n+2)}$};

\node(t2) at (2,1) {\tiny$\overline{n(n+1)}$};
\node(t3) at (4,1) {\tiny$\overline{(n-1)(n+1)}$};
\node(t4) at (5.7,1) {\tiny$\cdots$};
\node(t5) at (7,1) {\tiny$\overline{3(n+1)}$};
\node(t6) at (8.5,1) {\tiny$\overline{2(n+1)}$};
\node(t7) at (10,1) {\tiny$\overline{1(n+1)}$};

\node(a3) at (4,2) {\tiny$\vdots$};
\node(a5) at (7,2) {\tiny$\vdots$};
\node(a6) at (8.5,2) {\tiny$\vdots$};
\node(a7) at (10,2) {\tiny$\vdots$};

\node(b5) at (7,3) {\tiny$\overline{34}$};
\node(b6) at (8.5,3) {\tiny$\overline{24}$};
\node(b7) at (10,3) {\tiny$\overline{14}$};

\node(c6) at (8.5,4) {\tiny$\overline{23}$};
\node(c7) at (10,4) {\tiny$\overline{13}$};
\node(c8) at (11,4){\tiny$\overline{12}$};

\draw (s1)--(s2);
\draw (s2)--(s3);
\draw (s3)--(s4);
\draw (s4)--(s5);
\draw (s5)--(s6);
\draw (s6)--(s7);

\draw (t2)--(t3);
\draw (t3)--(t4);
\draw (t4)--(t5);
\draw (t5)--(t6);
\draw (t6)--(t7);

\draw (s2)--(t2);
\draw (s3)--(t3);
\draw (s5)--(t5);
\draw (s6)--(t6);
\draw (s7)--(t7);

\draw (t3)--(a3);
\draw (t5)--(a5);
\draw (t6)--(a6);
\draw (t7)--(a7);

\draw (b5)--(b6);
\draw (b6)--(b7);
\draw (a5)--(b5);
\draw (a6)--(b6);
\draw (a7)--(b7);

\draw (c6)--(c7);
\draw (c7)--(c8);
\draw (b6)--(c6);
\draw (b7)--(c7);
\end{tikzpicture}

According to \cite[Theorem 2.16(2)]{HJ} and Proposition 2.1, the simple $A_{3}^{n}$-modules associated to the vertices {\small $\overline{i(n+2)}$} for $1\leq i\leq n+1$ are exactly all the simple modules with projective dimension $n$. By Proposition 4.2 and Proposition 4.4, there are only finitely many non-isomorphic indecomposable modules in $^{\perp}(_{A_{3}^{n}}Q)$. These indecomposable modules are injective modules
{\scriptsize $\overline{(n+1)(n+2)}$},{\small ${\overline{(n+1)(n+2)}\atop\overline{n(n+2)}}, {\overline{n(n+2)}\atop \overline{(n-1)(n+2)}},\cdots,{\overline{2(n+2)}\atop \overline{1(n+2)}}$} and simple modules {\small $S_{\overline{i(n+2)}}$} for $1\leq i\leq n$. The injective dimension of {\small $S_{\overline{i(n+2)}}$} is $n+1-i$ for $1\leq i\leq n$.

The indecomposable $A_{3}^{n}$-modules with projective dimension $n$ which do not belong to $^{\perp}(_{A_{3}^{n}}Q)$ are listed below:

\noindent {\small $\overline{n(n+2)}\atop {\overline{n(n+1)}\quad \overline{(n-1)(n-2)}}$} with injective dimension $1$;

\noindent {\small $\overline{(n-1)(n+2)}\atop{\overline{(n-1)(n+1)}\quad \overline{(n-2)(n+2)}}$,$\overline{n(n+1)}\quad\overline{(n-1)(n+2)}\atop{\quad\overline{(n-1)(n+1)}\quad \overline{(n-2)(n+2)}}$} with injective dimension $2$ and $1$, respectively;

\noindent {\small $\overline{(n-2)(n+2)}\atop{\overline{(n-2)(n+1)}\quad \overline{(n-3)(n+2)}}$,$\overline{(n-1)(n+1)}\quad\overline{(n-2)(n+2)}\atop{\quad\overline{(n-2)(n+1)}\quad \overline{(n-3)(n+2)}}$,\quad $\overline{(n-1)(n+1)}\quad\overline{(n-2)(n+2)}\atop{\overline{(n-1)n}\quad\overline{(n-2)(n+1)}\quad \overline{(n-3)(n+2)}}$}

\noindent with injective dimension $3,1$ and $2$, respectively;

\noindent {\small $\overline{(n-3)(n+2)}\atop{\overline{(n-3)(n+1)}\quad \overline{(n-4)(n+2)}}$,$\overline{(n-2)(n+1)}\quad\overline{(n-3)(n+2)}\atop{\quad\overline{(n-3)(n+1)}\quad \overline{(n-4)(n+2)}}$,\quad $\overline{(n-2)(n+1)}\quad\overline{(n-3)(n+2)}\atop{\overline{(n-2)n}\quad\overline{(n-3)(n+1)}\quad \overline{(n-4)(n+2)}}$,}

\noindent {\small $\overline{(n-1)n}\quad\overline{(n-2)(n+1)}\quad\overline{(n-3)(n+2)}\atop{\quad\overline{(n-2)n}\quad\overline{(n-3)(n+1)}\quad \overline{(n-4)(n+2)}}$}

\noindent with injective dimension $4,1,3$ and $2$, respectively;

\noindent $\cdots \cdots$

\noindent {\small $\overline{2(n+2)}\atop{\overline{2(n+1)}\quad \overline{1(n+2)}}$,$\overline{3(n+1)}\quad\overline{2(n+2)}\atop{\quad\overline{2(n+1)}\quad \overline{1(n+2)}}$,\quad $\overline{3(n+1)}\quad\overline{2(n+2)}\atop{\overline{3n}\quad\overline{2(n+1)}\quad \overline{1(n+2)}}$,$\overline{4n}\quad\overline{3(n+1)}\quad\overline{2(n+2)}\atop{\quad\overline{3n}\quad\overline{2(n+1)}\quad \overline{1(n+2)}}$},

\noindent {\small $\overline{4n}\quad\overline{3(n+1)}\quad\overline{2(n+2)}\atop{\overline{4(n-1)}\quad\overline{3n}\quad\overline{2(n+1)}\quad \overline{1(n+2)}}$},\quad$\cdots$,\quad{\small${\quad\overline{(\frac{n}{2}+1)(\frac{n}{2}+3)}\;\cdots\;\overline{4n}\quad\overline{3(n+1)}\quad\overline{2(n+2)}}\atop{\overline{(\frac{n}{2}+1)(\frac{n}{2}+2)}\quad \overline{\frac{n}{2}(\frac{n}{2}+3)}\;\cdots\;\overline{3n}\quad\overline{2(n+1)}\quad\overline{1(n+2)}}$}

\noindent with injective dimension $n-1,1,n-2,2,n-3,\cdots, \frac{n}{2}+1,\frac{n}{2}-1$ and $ \frac{n}{2}$, respectively if $n$ is even or

\noindent {\small $\overline{2(n+2)}\atop{\overline{2(n+1)}\quad \overline{1(n+2)}}$,$\overline{3(n+1)}\quad\overline{2(n+2)}\atop{\quad\overline{2(n+1)}\quad \overline{1(n+2)}}$,\quad $\overline{3(n+1)}\quad\overline{2(n+2)}\atop{\overline{3n}\quad\overline{2(n+1)}\quad \overline{1(n+2)}}$,$\overline{4n}\quad\overline{3(n+1)}\quad\overline{2(n+2)}\atop{\quad\overline{3n}\quad\overline{2(n+1)}\quad \overline{1(n+2)}}$},

\noindent {\small $\overline{4n}\quad\overline{3(n+1)}\quad\overline{2(n+2)}\atop{\overline{4(n-1)}\quad\overline{3n}\quad\overline{2(n+1)}\quad \overline{1(n+2)}}$},\quad$\cdots$,\quad{\small${\overline{(\frac{n+1}{2}+1)(\frac{n+1}{2}+2)}\;\cdots\;\overline{4n}\quad\overline{3(n+1)}\quad\overline{2(n+2)}}\atop{\qquad\overline{\frac{n+1}{2}(\frac{n+1}{2}+2)} \;\cdots\;\overline{3n}\quad\overline{2(n+1)}\quad\overline{1(n+2)}}$}

\noindent with injective dimension $n-1,1,n-2,2,n-3,\cdots,\frac{n+1}{2}$ and $\frac{n-1}{2}$, respectively if $n$ is odd.

Then there are {\small$$(2n+1)+(1+2+3+\cdots+n-1)=\frac{(n+1)(n+2)}{2}$$} non-isomorphic indecomposable modules with projective dimension $n$. Thus $A_{3}^{n}$ is representation-finite by Theorem 3.2.

For $0\leq i\leq n$, it is easy to check that the number $t_{i}$ of non-isomorphic indecomposable modules with projective dimension $n$ and injective dimension $i$ is $n+1-i$. Moreover, there are {\small$$1+2+3+\cdots+n+(n+1)=\frac{(n+1)(n+2)}{2}$$}
\!non-isomorphic indecomposable projective modules. By Proposition 3.6, we have

$|{\ind}\,A_{3}^{n}|={\small\sum^{n}_{j=1}(n-j+1)t_{j}+nt_{0}+\frac{(n+1)(n+2)}{2}=\frac{(n+1)(n^{2}+5n+3)}{3}}.$

\end{proof}

\begin{example}\rm
(1) The quiver of $A_{3}^{4}$ is as follows.
{\small $${\xymatrix{
&&&\overline{23}\ar[r]&\overline{13}\ar[r]&\overline{12}\\
&&\overline{34}\ar[r]&\overline{24}\ar[u]\ar[r]&\overline{14}\ar[u]&\\
&\overline{45}\ar[r]&\overline{35}\ar[u]\ar[r]&\overline{25}\ar[u]\ar[r]&\overline{15}\ar[u]&\\
\overline{56}\ar[r]&\overline{46}\ar[u]\ar[r]&\overline{36}\ar[u]\ar[r]&\overline{26}\ar[r]\ar[u]&\overline{16}\ar[u]&
}}$$}

The indecomposable modules with projective dimension $4$ which do not belong to $^{\perp}(_{A_{3}^{4}}Q)$ are \;$\begin{smallmatrix}&\overline{46}&\\\overline{45}&&\overline{36}\end{smallmatrix}$, \;$\begin{smallmatrix}&\overline{36}&\\\overline{35}&&\overline{26}\end{smallmatrix}$, \;$\begin{smallmatrix}\overline{45}&&\overline{36}&\\&\overline{35}&&\overline{26}\end{smallmatrix}$,
\;$\begin{smallmatrix}&\overline{26}&\\\overline{25}&&\overline{16}\end{smallmatrix}$,
$\begin{smallmatrix}\overline{35}&&\overline{26}&\\&\overline{25}&&\overline{16}\end{smallmatrix}$

\noindent and $\begin{smallmatrix}&\overline{35}&&\overline{26}&\\\overline{34}&&\overline{25}&&\overline{16}\end{smallmatrix}$.

There are 65 non-isomorphic indecomposable $A_{3}^{4}$-modules.

(2) The quiver of $A_{3}^{5}$ is as follows.
{\small $$\xymatrix{
&&&&\overline{23}\ar[r]&\overline{13}\ar[r]&\overline{12}\\
&&&\overline{34}\ar[r]&\overline{24}\ar[u]\ar[r]&\overline{14}\ar[u]&\\
&&\overline{45}\ar[r]&\overline{35}\ar[u]\ar[r]&\overline{25}\ar[u]\ar[r]&\overline{15}\ar[u]&\\
&\overline{56}\ar[r]&\overline{46}\ar[u]\ar[r]&\overline{36}\ar[u]\ar[r]&\overline{26}\ar[r]\ar[u]&\overline{16}\ar[u]&\\
\overline{67}\ar[r]&\overline{57}\ar[r]\ar[u]&\overline{47}\ar[r]\ar[u]&\overline{37}\ar[r]\ar[u]&\overline{27}\ar[r]\ar[u]&\overline{17}\ar[u]
}$$}

The indecomposable modules with projective dimension $5$ which do not belong to $^{\perp}(_{A_{3}^{5}}Q)$ are \;$\begin{smallmatrix}&\overline{57}&\\\overline{56}&&\overline{47}\end{smallmatrix}$, \;$\begin{smallmatrix}&\overline{47}&\\\overline{46}&&\overline{37}\end{smallmatrix}$, \;$\begin{smallmatrix}\overline{56}&&\overline{47}&\\&\overline{46}&&\overline{37}\end{smallmatrix}$,
\;$\begin{smallmatrix}&\overline{37}&\\\overline{36}&&\overline{27}\end{smallmatrix}$,
\;$\begin{smallmatrix}\overline{46}&&\overline{37}&\\&\overline{36}&&\overline{27}\end{smallmatrix}$,

\noindent \;$\begin{smallmatrix}&\overline{46}&&\overline{37}&\\\overline{45}&&\overline{36}&&\overline{27}\end{smallmatrix}$,
\;$\begin{smallmatrix}&\overline{27}&\\\overline{26}&&\overline{17}\end{smallmatrix}$,
\;$\begin{smallmatrix}\overline{36}&&\overline{27}&\\&\overline{26}&&\overline{17}\end{smallmatrix}$,
\;$\begin{smallmatrix}&\overline{36}&&\overline{17}&\\\overline{35}&&\overline{26}&&\overline{17}\end{smallmatrix}$ \, and
\;$\begin{smallmatrix}\overline{45}&&\overline{36}&&\overline{27}&\\&\overline{35}&&\overline{26}&&\overline{17}\end{smallmatrix}$.

There are 106 non-isomorphic indecomposable $A_{3}^{5}$-modules.
\end{example}

The following result is useful in the proof of our main result in this section.

\begin{prop}{\rm \cite[Theorem 3.1(i)]{M}}
Let $T_{2}(A_{m}^{1})=\left( \begin{smallmatrix}
                       A_{m}^{1} & 0 \\
                       A_{m}^{1} & A_{m}^{1}
                      \end{smallmatrix}
                      \right) $
be the triangular matrix algebra of $A_{m}^{1}$. Then $T_{2}(A_{m}^{1})$ is representation-finite if $m\leq 4$, tame infinite if $m=5$ and wild otherwise.
\end{prop}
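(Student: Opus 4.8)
The plan is to reduce the classification of the representation type of $T_2(A_m^1)$ to the representation type of a single hereditary algebra, and then invoke the known classification of representation-finite, tame, and wild hereditary algebras by their underlying graphs. The key structural fact is that $T_2(A_m^1)$ is itself a hereditary algebra: since $A_m^1 = kA_m$ is hereditary (its quiver $1\to 2\to\cdots\to m$ is acyclic with no relations), the triangular matrix algebra $T_2(A_m^1)$ is the path algebra of the quiver obtained by taking two copies of $A_m$, say with vertices $1,\ldots,m$ and $1',\ldots,m'$, and adjoining a connecting arrow $i\to i'$ (or $i'\to i$, depending on conventions) for each $i$. Concretely, $T_2(kA_m)\cong kQ$ where $Q$ is the quiver
\[
\xymatrix@=1.2em{
1\ar[r]\ar[d] & 2\ar[r]\ar[d] & \cdots\ar[r] & m\ar[d]\\
1'\ar[r] & 2'\ar[r] & \cdots\ar[r] & m'
}
\]
with all squares commuting — but since we are dealing with the full triangular matrix algebra and $A_m^1$ is hereditary, in fact $T_2(A_m^1)$ is hereditary and Morita equivalent to the path algebra of this quiver (a so-called $A_m$-by-$A_m$ ladder).

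First I would make precise the identification of $T_2(A_m^1)$ with a path algebra of an explicit quiver, using the standard description of modules over a triangular matrix algebra $\left(\begin{smallmatrix}R&0\\M&S\end{smallmatrix}\right)$ as triples $(X,Y,f)$ with $X$ an $R$-module, $Y$ an $S$-module, and $f\colon M\otimes_R X\to Y$; here $R=S=A_m^1$ and $M={}_{A_m^1}(A_m^1)_{A_m^1}$ is the bimodule $A_m^1$ itself. Since the bimodule is the regular one, this recovers the ladder quiver above. Next I would observe that $T_2(A_m^1)$ is hereditary, so its representation type is governed by the Tits form / underlying graph: a connected hereditary algebra is representation-finite iff its underlying graph is a Dynkin diagram, tame iff it is an extended Dynkin (Euclidean) diagram, and wild otherwise (Gabriel's theorem together with the Donovan–Freislich / Nazarova classification).

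The decisive step is then to determine the underlying graph of the ladder quiver for each $m$ and match it against the Dynkin/Euclidean list. For $m=2$ the graph has $4$ vertices and is $A_4$ (Dynkin) — or, more carefully, one checks it is a tree of type $D_4$ or $A_4$ depending on the precise shape; for small $m$ this is a finite check. The expected outcome is: $m\le 4$ gives a Dynkin-type graph (finite type), $m=5$ gives exactly an extended Dynkin graph (tame), and $m\ge 6$ gives a graph strictly containing a Euclidean subgraph or having a too-large Tits form, hence wild. In practice the cleanest route is to compute the Tits quadratic form $q(\mathbf{d})=\sum_i d_i^2 - \sum_{a\colon i\to j} d_i d_j$ of the ladder quiver and test positive-definiteness ($m\le 4$), positive-semidefiniteness with nontrivial radical ($m=5$), and indefiniteness ($m\ge 6$).

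The main obstacle is the bookkeeping at the boundary cases $m=4$ and $m=5$: one must verify that the $2\times 4$ ladder really is of finite type and that the $2\times 5$ ladder is exactly tame rather than already wild. Since this statement is quoted from \cite{M}, I would ultimately cite that reference for the verification, but the conceptual proof above — hereditary reduction plus the Tits form test — is what underlies it, and the only genuinely delicate point is confirming that the $2\times 5$ ladder quiver's Tits form is positive-semidefinite with a one-dimensional radical (equivalently, that its graph is Euclidean, here of type $\widetilde{D}$ or $\widetilde{E}$), which is the threshold separating tame from wild.
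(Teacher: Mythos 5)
The paper does not prove this statement itself; it is quoted verbatim from \cite{M}, so the only thing to assess is whether your sketch would actually work. It would not, because its key structural claim is false: $T_{2}(A_{m}^{1})$ is \emph{not} hereditary for $m\geq 2$, and it is not the path algebra of the ladder quiver. One has $T_{2}(kA_{m})\cong kA_{m}\otimes_{k}kA_{2}$, which is the ladder quiver \emph{bound by the commutativity relations} on all squares; these relations are genuinely present (the bimodule description $(X,Y,f)$ you invoke encodes them as the commuting-square condition), and the algebra has global dimension $2$ for $m\geq 2$. A triangular matrix algebra $\left(\begin{smallmatrix}R&0\\M&S\end{smallmatrix}\right)$ over a hereditary ring need not be hereditary, and this one is not. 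Consequently the entire reduction to Gabriel's theorem and the Tits form of the underlying graph is unavailable.

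The error is not merely a missing verification: your own criterion, applied to the free ladder quiver, gives the wrong answer. The underlying graph of the $2\times m$ ladder has $2m$ vertices and $3m-2$ edges, so it is a tree only for $m=1$; for $m=2$ it is a $4$-cycle (type $\widetilde{\mathbb{A}}_{3}$, hence tame by your test) and for $m\geq 3$ it properly contains a cycle plus extra edges, hence would be wild. That directly contradicts the statement that $T_{2}(A_{m}^{1})$ is representation-finite for all $m\leq 4$, which confirms that the hereditary identification cannot be right. A correct proof has to work with the bound quiver algebra $kA_{m}\otimes_{k}kA_{2}$ (equivalently, with the morphism category of $kA_{m}$-modules, i.e.\ maps $X\to Y$ between $A_{m}^{1}$-modules), which is what Marmaridis does in \cite{M} and what Auslander--Reiten's criterion in \cite{AR} is designed for; the finite/tame/wild thresholds at $m=4$ and $m=5$ come out of that analysis (e.g.\ via covering theory or explicit one-parameter families), not from the Dynkin/Euclidean list of graphs.
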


Now we give a classification of representation-finite $A_{m}^{n}$.

\begin{thm}
Let $A_{m}^{n}$ be the $(n-1)$-Auslander algebra of linearly oriented type $A_{m}$$(n,m\geq2)$. Then $A_{m}^{n}$ is representation-finite if and only if one of the followings holds:

 {\rm(1)} $m=2$. In this case, $|{\ind}\,A_{2}^{n}|=2n+1$.

 {\rm(2)} $m=3$. In this case, $|{\ind}\,A_{3}^{n}|=\frac{(n+1)(n^{2}+5n+3)}{3}$.

 {\rm(3)} $n=2$ and $m=4$. In this case, $|{\ind}\,A_{4}^{2}|=56$.
\end{thm}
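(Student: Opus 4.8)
The plan is to reduce the entire classification to a few base cases by means of two monotonicity principles that fall out of the structure theory of Section 4.1, and then to settle those base cases. First I would record the two ways in which representation-infiniteness is inherited. On the one hand, $A_m^n\cong\End_{A_m^{n-1}}M$ for the unique basic $(n-1)$-cluster tilting module $M$, so Remark 2.7(1) shows that if $A_m^{n-1}$ is representation-infinite, then so is $A_m^n$ (monotonicity in $n$). On the other hand, Proposition 4.2 exhibits $^{\perp}(_{A_m^n}Q)$ as a full subcategory of $A_m^n\text{-}\mo$ equivalent to $A_{m-1}^n\text{-}\mo$; since this subcategory is a torsion class, hence closed under summands, its indecomposables are indecomposable $A_m^n$-modules, so $A_{m-1}^n$ representation-infinite forces $A_m^n$ representation-infinite (monotonicity in $m$). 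Thus representation-infiniteness propagates both when $n$ grows with $m$ fixed and when $m$ grows with $n$ fixed.

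For the representation-finite side, the cases $m=2$ and $m=3$ are exactly Propositions 4.3 and 4.5, so only $(n,m)=(2,4)$ remains. Here $A_4^2$ is by definition the classical Auslander algebra of the path algebra $A_4^1=kA_4$, so by the Auslander--Reiten theorem recalled in the introduction it is representation-finite if and only if $T_2(A_4^1)$ is, which holds by Proposition 4.7 since $4\le 4$. To obtain $|\ind A_4^2|=56$ I would apply Proposition 3.6 with its $n$ equal to $1$: after listing the indecomposable $A_4^2$-modules of projective dimension $2$ --- those lying in $^{\perp}(_{A_4^2}Q)\cong A_3^2\text{-}\mo$ together with the finitely many indecomposables having a composition factor of projective dimension at most $1$ --- and recording their injective dimensions $t_0,t_1,t_2$, the formula yields $2t_0+2t_1+t_2+\#\{\text{indecomposable projectives}\}=56$, exactly in the spirit of the enumeration carried out for $A_3^n$ in Proposition 4.5.

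For the representation-infinite side I would start from $A_5^2$, again the Auslander algebra of $A_5^1=kA_5$: by Proposition 4.7 the algebra $T_2(A_5^1)$ is tame infinite, and Auslander--Reiten transfers this to $A_5^2$. Every case with $m\ge 5$ is then reached from $A_5^2$: monotonicity in $n$ gives $A_5^n$ for all $n\ge 2$, and iterating monotonicity in $m$ gives $A_m^n$ for all $m\ge 5$ and $n\ge 2$. The only cases left are $m=4$ with $n\ge 3$, and by monotonicity in $n$ these all follow once $A_4^3$ is shown to be representation-infinite.

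The main obstacle is precisely the representation-infiniteness of $A_4^3$. It cannot be reached from the monotonicity principles, because both of its building blocks $\fac(\upsilon^{-1}{}_{A_4^3}Q)\cong A_4^2\text{-}\mo$ and $^{\perp}(_{A_4^3}Q)\cong A_3^3\text{-}\mo$ are representation-finite, so it must be settled directly. By Theorem 3.2, the $2$-Auslander algebra $A_4^3$ is representation-finite if and only if it has only finitely many indecomposables of projective dimension $3$, so it suffices to produce infinitely many of these. Concretely, I would use the explicit quiver with commutativity relations $Q_4^3$ of Section 4.1 to single out a convex set of vertices whose associated quotient $A_4^3/A_4^3eA_4^3$ is the path algebra of the corresponding full subquiver modulo induced relations and is representation-infinite --- for instance a minimal representation-infinite (critical) algebra of tame concealed type; since the module category of such a quotient embeds in $A_4^3\text{-}\mo$ as a full subcategory closed under extensions, its indecomposables are indecomposable $A_4^3$-modules and $A_4^3$ is representation-infinite. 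Pinning down this subquotient and verifying its representation type --- equivalently, exhibiting a one-parameter family of pairwise non-isomorphic indecomposables of projective dimension $3$ --- is the delicate, computational heart of the proof; everything else is bookkeeping organised by Propositions 4.2 and 4.7 and Theorem 3.2.
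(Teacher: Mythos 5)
Your proposal follows essentially the same route as the paper: monotonicity of representation-infiniteness in $n$ (Proposition 2.6(2)), the Auslander--Reiten criterion together with Marmaridis' result to dispose of all $m\ge 5$ via $A_m^2$, the explicit enumerations of Propositions 4.4 and 4.5 for $m=2,3$, a direct count of $56$ indecomposables for $A_4^2$, and a representation-infinite subquotient argument for $A_4^3$ (your extra ``monotonicity in $m$'' via $^{\perp}(_{A_m^n}Q)\cong A_{m-1}^n\text{-}\mo$ is valid but not needed, since Marmaridis already kills $A_m^2$ for every $m\ge 5$). The one step you explicitly leave open --- pinning down a representation-infinite convex subquotient of $A_4^3$ --- is exactly what the paper supplies: the full subquiver of $Q_4^3$ on the vertices $146,136,236,145,245,235$ is of Euclidean type $\widetilde{\mathbb{A}}_{5}$ and carries no induced relations, so its path algebra is representation-infinite and its indecomposables lift to $A_4^3$-modules, completing the argument just as you outlined.
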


\begin{proof}
By the construction of $A_{m}^{n}$ and Proposition 2.6(2), we know that if $A_{m}^{n}$ is representation-infinite, so is $A_{m}^{n+1}$. Note that $A_{m}^{2}$ is the Auslander algebra of the path algebra $A_{m}^{1}=kA_{m}$. According to \cite[Theorem 1.1]{AR}, $A_{m}^{2}$ is representation-finite if and only if $T_{2}(A_{m}^{1})=\left( \begin{smallmatrix}
                       A_{m}^{1} & 0 \\
                       A_{m}^{1} & A_{m}^{1}
                      \end{smallmatrix}
                      \right) $
is representation-finite. Then it follows from Proposition 4.7 that $A_{m}^{2}$ is representation-infinite for $m\geq 5$. Thus $A_{m}^{n}$ is representation-infinite for $m\geq 5$ and $n\geq 2$.

The cases $m=2$ and $m=3$ follow from Proposition 4.4 and 4.5.

Assume $m=4$. The algebra $A_{4}^{2}$ is the Auslander algebra of the path algebra $A_{4}^{1}=k(1\rightarrow 2\rightarrow 3\rightarrow 4)$. It is easy to check that there are $56$ non-isomorphic indecomposable $A_{4}^{2}$-modules. The quiver of $A_{4}^{3}$ is as follows.

{\small $$\xymatrix{
          &156\ar[r]         &256\ar[r]                &356\ar[r]         &456        \\
          &146\ar[r]\ar[u]   &246\ar[r]\ar[u]          &346\ar[u]         &           \\
126\ar[r] &136\ar[r]\ar[u]   &236\ar[u]                &                  &           \\
          &                  &145\ar[luu]\ar[r]        &245\ar[r]\ar[luu] &345\ar[luu] \\
          &125\ar[r]\ar[luu] &135\ar[r]\ar[luu]\ar[u]  &235\ar[u]\ar[luu] &            \\
123\ar[r] &124\ar[r]\ar[u]   &134\ar[r]\ar[u]          &234\ar[u]         &
}$$}

This quiver contains the following subquiver of the Euclidean type $\widetilde{\mathbb{A}}_{5}$
{\small $$\xymatrix{
146&&\\
136\ar[r]\ar[u]&236&\\
&145\ar[r]\ar[luu]&245\\
&&235\ar[u]\ar[luu]
}$$}whose path algebra admits infinitely many non-isomorphic indecomposable modules. Then $A_{4}^{3}$ is representation-infinite and it implies that $A_{4}^{n}$ is of infinite representation type for $n\geq3$. This completes the proof.

\end{proof}

\small

Shen Li: School of Science, Shandong Jianzhu University, PR China

\emph{E-mail address}: fbljs603@163.com

\end{document}